\theoremstyle{plain}
\newtheorem{theo}{Theorem}
\newtheorem{prop}[theo]{Proposition}
\newtheorem{lem}[theo]{Lemma}
\newtheorem{coro}[theo]{Corollary}
 \theoremstyle{definition}
 \newtheorem{ex}[theo]{Example}
 \theoremstyle{remark}
\numberwithin{equation}{section}
\newcommand{\naturals}{\mathbb{N}}
\newcommand{\integers}{\mathbb{Z}}
\DeclareMathOperator{\supp}{supp} 
 \DeclareMathOperator{\Aut}{Aut}
\begin{document}

\title{Free group algebras in Malcev-Neumann skew fields of fractions}

\author[J. S\'{a}nchez]{Javier S\'{a}nchez}
\address{Department of Mathematics - IME, University of S\~ao Paulo,
Caixa Postal 66281, S\~ao Paulo, SP, 05314-970, Brazil}
\email{jsanchez@ime.usp.br}
\thanks{Supported by FAPESP Processo n\textordmasculine\ 2009/50886-0,
by the DGI and the European Regional Development Fund, jointly,
through Project MTM2008-06201-C02-01, and by the Comissionat per
Universitats i Recerca of the Generalitat de Catalunya, Project 2009
SGR 1389.}

\subjclass[2000]{16K40, 6F15, 20F60, 16W60, 16S35}

\keywords{Division rings, Malcev-Neumann series}

\date{12 July 2011}

\begin{abstract}
Let $K$ be a skew field and $(G,<)$ an ordered group. We show that
the skew field generated by the group ring $K[G]$ inside the
Malcev-Neumann series ring $K((G;<))$ contains noncommutative free
group algebras.
\end{abstract}

\maketitle

\section*{Introduction}

There are two conjectures on the existence of free objects in skew
fields that have attracted much attention. The first one was made by
A. Lichtman~\cite{Lichtmanonsubgroupsof}:

\begin{enumerate}[(G)]
\item If a skew field $D$ is not commutative (i.e. not a field),
then $D\setminus\{0\}$ contains a noncyclic free group.
\end{enumerate}
An affirmative answer has been given when the center of $D$ is
uncountable~\cite{chibafreegroupsinsidedivisionrings} and when $D$
is finite-dimensional over its center
\cite{Goncalvesfreegroupsinsubnormal}. The second conjecture was
formulated in~\cite{Makaronfreesubobjects}:
\begin{enumerate}[(A)]
\item Let $D$ be a skew field with center $Z$. If $D$ is finitely
generated as skew field over $Z$, then $D$ contains a noncommutative
free $Z$-algebra.
\end{enumerate}
Some important results have been obtained in \cite{Makar1},
\cite{Makar2}, \cite{Lichtmanfreesubalgebrasenvelopingalgebras},
\cite{BellRogalskifreesubalgebrasoreextensions} and other papers. In
many examples in which conjecture (A) holds, $D$ in fact contains a
noncommutative free group $Z$-algebra
\cite{FigueiredoGoncalvesShirvani}. For example, this always happens
if the center of $D$ is uncountable~\cite{GoncalvesShirvani}.
Therefore it makes sense to consider the following unifying
conjecture:
\begin{enumerate}[(GA)]
\item Let $D$ be a skew field with center $Z$.
If $D$ is  finitely generated as a skew field over $Z$ and $D$ is
infinite dimensional over $Z$, then $D$ contains a noncommutative
free group $Z$-algebra.
\end{enumerate}
For more details on these and related conjectures the reader is
referred to \cite{GoncalvesShirvaniSurvey}.

\medskip

We deal with conjectures (A) and (GA) for an important class of skew
fields obtained from a skew field $K$ and an ordered group $(G,<)$,
i.e.  $G$ is a group  and  $<$ is a total order  in $G$ such that
the relation $x<y$ implies that $zx<zy$ and $xz<yz$ for all
$x,y,z\in G$. The Malcev-Neumann series ring $K((G;<))$ is defined
to be the set of all formal series $f=\sum_{x\in G}xa_x$, with
$a_x\in K$, whose \emph{support}  is a well-ordered subset of
$(G,<)$ where $\supp (f)=\{x\in G\mid a_x\neq 0\}$. The operations
of $K$ and $G$, together with the relations $ax=xa$, for all $a\in
K$ and $x\in G$, induce a multiplication in $K((G;<))$. With this
product and componentwise addition, $K((G;<))$ is a skew field
\cite{Malcev}, \cite{Neumann}. Clearly $K((G;<))$ contains the group
ring $K[G]$. We denote by $K(G)$ the skew subfield of $K((G;<))$
generated by $K[G]$. It was proved in \cite{Chibafreefields} that
$K(G)$ is infinite dimensional over its center when $G$ is not
commutative.

Our main result is Theorem~\ref{theo:mainresult}. It states that if
$(G,<)$ is not commutative, then $K(G)$ satisfies conjecture (GA).
As a consequence one obtains that any skew field generated by a
group ring $K[G]$ with $G$ a noncommutative orderable group and $K$
a skew field satisfies conjecture (A), see
Corollary~\ref{coro:maincorollary}. We remark that our methods
exhibit the generators of the free group algebra when the order of
the group is well known.

\medskip

Although $K(G)$ does not depend on the structure of the ordered
group $(G,<)$ \cite{Hughes}, our proof of the main result strongly
depends on it. Our proof also needs of  richer structures than group
rings and the Malcev-Neumann series rings already defined, namely
crossed products and the Malcev-Neumann series they determine. These
and related concepts are reviewed in the first section.

Section~\ref{sec:usefulresults} deals with results that will be used
to reduce the problem of finding free group algebras inside skew
fields. The first one is a generalization of the fact that finding
free (group) algebras over the prime subfield  equivalent to finding
free (group) algebras over any central subfield
\cite[Lemma~1]{MakarMalcolmson91}. The second one states that if our
orderable group $G$ contains a free monoid, then $K(G)$ contains a
free group algebra. We also give some results describing the
behavior of Malcev-Neumann series rings that will be used later.
They may be well known, but we provide a proof because we find them
of independent interest and we have not found them in the
literature.

The core of the paper is Section~\ref{sec:proofofmainresult}. Here
we give a proof of our main result. It is divided in three
subsections according to what type of ordered group $(G,<)$ is. The
proof for ordered groups of type~1 consists in showing that the
problem can be reduced to torsion-free nilpotent groups. For these
groups the result was proved in \cite{FigueiredoGoncalvesShirvani}.
For ordered groups of type 2 and 3, we are able to find free monoids
inside $G$.

In Section~\ref{sec:someconsequences} we give some consequences from
the main result and the methods we have developed for its proof.


\section{Preliminaries}\label{sec:preliminaries}

In this section we present the concepts and notation that will be
used throughout the paper.

A \emph{skew field} is a nonzero associative ring with identity
element and such that every nonzero element has an inverse.

If $R$ is a ring, a \emph{skew field of fractions} of $R$ is a skew
field $D$ together with a ring embedding $R\hookrightarrow D$ such
that $D$ is generated as a skew field by (the image of) $R$.

\subsection{Ordered groups}

We  recall  standard material on ordered groups. For further details
the reader is referred to any monograph on the subject such as
\cite{Fuchs}. We will sometimes use the results reviewed here
without any further reference.

A group $G$ is  \emph{orderable} if there exists a total order $<$
of G
 such that, for all  $x, y, z \in G$,
\begin{equation}\label{eq:orderedcondition}
 x < y\ \ \textrm{ implies that }\ \ zx < zy \textrm{ and } xz < yz.
\end{equation}
In this event we say that $(G,<)$ is an \emph{ordered group}.

Let $(G,<)$ be an ordered group. A subgroup $H$ of $G$ is
\emph{convex} if for any $x,z\in H$ and $y\in G$, the inequalities
$x< y< z$ imply that $y\in H$. If a convex subgroup $N$ is normal in
$G$, then $G/N$ can be ordered in a natural way: given $x,y\in G$
such that $xN\neq yN$, define $xN \prec yN$ if $x<y$. Moreover there
is a natural bijection between the convex subgroups of $(G,<)$ that
contain $N$ and the convex subgroups of $(G/N,\prec)$.

An element $x\in G$ is \emph{positive} if $1<x$.

The ordered group $(G,<)$ is \emph{archimedean} if for any positive
elements $x,y\in G$  there is a natural number $n$ such that
$x<y^n$, equivalently, $(G,<)$ has no other convex subgroups other
than $\{1\}$ and $G$. An archimedean group is order isomorphic to a
subgroup of the additive group of the real numbers with the natural
order.

If $A,B$ are subgroups of the additive group of the real numbers
with the natural order and $\varphi\colon A\rightarrow B$ is a
\emph{morphism of ordered groups} (i.e.  a morphism of groups such
that $x<y$ implies $\varphi(x)\leq \varphi(y)$), then there exists a
real number $r\geq 0$ such that $\varphi(x)=rx$ for all $x\in A$.

The set $\Gamma$ of convex subgroups of $(G,<)$ satisfies the
following properties:
\begin{enumerate}[$-$]
\item $\Gamma$ is a chain with respect to inclusion, that is, if $H_1,H_2\in
\Gamma$, then either $H_1\subseteq H_2$ or $H_2\subseteq H_1$.
\item The arbitrary union and intersection of elements in $\Gamma$
belongs to $\Gamma$
\item $\Gamma$ is \emph{infrainvariant}, that is, $xHx^{-1}\in \Gamma$ for
each $H\in \Gamma$ and $x\in G$.
\end{enumerate}

We say that a pair $(N,H)$ is a \emph{convex jump} if
$N,H\in\Gamma$, $N\lneqq H$ and no subgroup in $\Gamma$ lies
properly between $N$ and $H$. It is known that   $N\lhd H$ for any
convex jump $(N,H)$. Hence $H/N$ is order isomorphic to a subgroup
of the additive group of the reals with the natural order.

Let  $x\in G$. The \emph{convex jump determined by $x$} is the
convex jump $(N_x,H_x)$, where $N_x$ is the union of all convex
subgroups of $(G,<)$ that do not contain $x$, and $H_x$ is the
intersection of all convex subgroups of $(G,<)$ that contain $x$.

\subsection{Crossed product group rings}\label{sec:crossedproducts}

The following definitions and comments are from
\cite[Chapter~1]{Passman1}.

Let $R$ be a ring and let $G$ be a group. A \emph{crossed product}
 of $G$ over $R$ is an associative ring $R[\bar{G};\sigma,\tau]$
which contains $R$ and has as an $R$-basis the set $\bar{G}$, a copy
of $G$. Thus each element of $R[\bar{G};\sigma,\tau]$ is uniquely
expressed as a finite sum $\Sigma_{x\in G}\bar{x}r_x$, with $r_x\in
R$. Addition is componentwise and multiplication is determined by
the two rules below. For $x, y \in G$ we have a \emph{twisting}
$$\bar{x}\bar{y} = \overline{xy}\tau(x,y)$$ where $\tau\colon G
\times G \rightarrow U(R)$, the group of units of $R$. Furthermore
for $x\in G$ and $r\in R$ we have an \emph{action}
$$r\bar{x}=\bar{x}r^{\sigma(x)}$$
where $\sigma\colon G\rightarrow \Aut(R)$ and $r^{\sigma(x)}$
denotes the image of $r$ by $\sigma(x)$.

If $d\colon G\rightarrow U(R)$ assigns to each element $x\in G$ a
unit $d_x$ of $R$, then $\tilde{G} = \{ \tilde{x} = \bar{x}d_x\mid
x\in G \}$ is another $R$-basis for $R[\bar{G};\sigma,\tau]$.
Moreover there exist maps $\sigma'\colon G\rightarrow \Aut(R)$ and
$\tau'\colon G\times G\rightarrow U(R)$ such that
$R[\bar{G};\sigma,\tau]=R[\tilde{G};\sigma',\tau']$. We call this a
\emph{diagonal change of basis}. Via a diagonal change of basis if
necessary, we will assume that $1_{R[\bar{G};\sigma,\tau]}=
\bar{1}$. The embedding of $R$ into $R[\bar{G};\sigma,\tau]$ is then
given by $r\mapsto r\bar{1}$. Note that $\bar{G}$ acts on both
$R[\bar{G};\sigma,\tau]$ and $R$ by conjugation and that for $x\in
G,$ $r \in R$ we have $\bar{x}^{-1}r\bar{x} = r^{\sigma(x)}$.

If there is no action or twisting, that is, if $\sigma(x)$ is the
identity and $\tau( x , y )= 1$ for all $x,y \in G$, then
$R[\bar{G};\sigma,\tau]= R[G]$ is the usual \emph{group ring}.

If $N$ is a subgroup of $G$, the crossed product
$R[\bar{N};\sigma_{\mid N},\tau_{\mid N\times N}]$ embeds in
$R[\bar{G};\sigma,\tau]$, and we will denote it as
$R[\bar{N};\sigma,\tau]$.

Suppose that $N$ is a normal subgroup of $G$ and let
$S=R[\bar{N};\sigma,\tau]$. Then $R[\bar{G};\sigma,\tau]$ can be
seen as a crossed product of $G/N$ over $S$. More precisely
$R[\bar{G};\sigma,\tau]=S[\overline{G/N};\tilde{\sigma},\tilde{\tau}]$,
where $\overline{G/N}$,  $\tilde{\sigma}$ and $\tilde{\tau}$ are
defined as follows.

For each $1\neq\alpha\in G/N,$ let $x_\alpha\in G$ be a fixed
representative of the class $\alpha$. For the class $1\in G/N$
choose $x_1=1\in G$. Set $\bar{\alpha}=x_\alpha$ for all $\alpha\in
G/N$. Then $G=\dot{\bigcup}\bar{\alpha}N$. Define
$\overline{G/N}=\{\bar{\alpha}\mid \alpha\in G/N\}\subseteq\bar{G}$.
This shows that
$R[\bar{G};\sigma,\tau]=\bigoplus\limits_{\alpha}\bar{\alpha}S$.
Therefore $\overline{G/N}$ is an $S$-basis for
$R[\bar{G};\alpha,\tau]$. Moreover, we have the maps:
$$\begin{array}{c @{} c @{} c @{} c @{} c @{} c @{} l }
\tilde{\sigma}  & \colon G/N    & \longrightarrow &
\multicolumn{4}{l}{\textrm{Aut}(S)}
      \\

               &  \alpha   & \longmapsto     & \hspace{0.5cm}\tilde{\sigma}(\alpha)

 & \colon S & \longrightarrow & \hspace{0.25cm} S \\
       &      &         &                 &
  \hspace{0.2cm}y & \longmapsto     & \hspace{0.1cm} \bar{x}_{\alpha}^{-1}y{\bar{x}_{\alpha}} \\
\end{array}$$
$$ \begin{array}{c @{} c @{} c @{} l} \tilde{\tau} & \colon G/N
\times G/N & \longrightarrow &
\hspace{0.1cm} U(S) \\
 & (\alpha,\beta) & \longmapsto & \hspace{0.1cm}
 \tilde{\tau}(\alpha,\beta)=\bar{n}_{\alpha\beta}\tau(x_{\alpha\beta},n_{\alpha\beta})^{-1}\tau(x_{\alpha},x_{\beta})
\end{array}$$
where $n_{\alpha\beta}$ is the unique element in $N$ such that
$x_{\alpha\beta}n_{\alpha\beta}=x_\alpha x_\beta$. Then
$(\sum\limits_{n\in
N}\bar{n}r_n)\bar{\alpha}=\bar{\alpha}{(\sum\limits_{n\in
N}\bar{n}r_n)}^{\tilde{\sigma}(\alpha)}$ and $\bar{\alpha}
\bar{\beta}=\overline{\alpha\beta}\tilde{\tau}(\alpha,\beta)$, as
desired.

Notice that the assumption  ($x_1=1$) implies that $S$ embeds in
$R[\bar{G};\sigma,\tau]$ via $\bar{n}r\mapsto 1\cdot\bar{n}r.$

\subsection{Malcev-Neumann series rings}

Let $R$ be a ring, and let $(G,<)$ be an ordered group. Form a
crossed product  $R[\bar{G};\sigma,\tau].$ We define a new ring,
denoted $R((\bar{G};\sigma,\tau,<))$ and called \emph{Malcev-Neumann
series ring}, in which $R[\bar{G};\sigma,\tau]$ embeds. As a set
$$R((\bar{G};\sigma,\tau,<))=\Big\{f=\sum_{x\in G}\bar{x}a_x\mid a_x\in R,\ \supp(f) \textrm{ is
well ordered}\Big\},$$ where $\supp(f)=\{x\in G\mid a_x\neq 0\}$.

The addition and product are defined extending the ones in
$R[\bar{G};\sigma,\tau]$. Thus, given $f=\sum\limits_{x\in G}\bar x
a_x$ and $g=\sum\limits_{x\in G}\bar x b_x$ in
$R((\bar{G};\sigma,\tau,<))$, the sum is defined by
$$f+g=\sum_{x\in G}\bar x (a_x+b_x),$$ and multiplication by
$$fg=\sum_{x\in
G}\bar x\Big(\sum_{yz=x}\tau(y,z)a_y^{\sigma(z)}b_z\Big).$$ When the
crossed product is a group ring $R[G]$, its Malcev-Neumann series
ring will be denoted by $R((G;<))$.

Let $f\in R((\bar{G};\sigma,\tau,<))$ and $x_0=\min\supp f.$ If the
coefficient of $x_0$ is an invertible element of $R$, then $f$ is
invertible. Hence, if $R$ is a skew field, then
$R((\bar{G};\sigma,\tau,<))$ is a skew field \cite{Malcev},
\cite{Neumann}.

If $K$ is a skew field, the skew field of fractions of
$K[\bar{G};\sigma,\tau]$ inside $K((\bar{G};\sigma,\tau,<))$ will be
called the \emph{Malcev-Neumann skew field of fractions} of
$K[\bar{G};\sigma,\tau]$ and denoted by $K(\bar{G};\sigma,\tau)$. It
is important to observe the following. For a subgroup $H$ of $G$,
$K((\bar{H};\sigma,\tau))$ and $K(\bar{H};\sigma,\tau)$ can be seen
as skew subfields of $K((\bar{G};\sigma,\tau))$ and
$K(\bar{G};\sigma,\tau)$, respectively, in the natural way. In the
case of the group ring $K[G]$, the Malcev-Neumann skew field of
fractions is denoted by $K(G)$. We remark that
$K(\bar{G};\sigma,\tau)$ does not depend on the order $<$ of $G$,
see \cite{Hughes}.


\section{Technical results}\label{sec:usefulresults}

This section is devoted to prove some results  that will be used to
prove our main result, but they are interesting in themselves too.

The next lemma is a generalization of known results that we will
obtain as a corollary. It reduces the problem of finding free
(group) algebras over the center of a skew field. The proof is very
similar to the one of the original result
\cite[Lemma~1]{MakarMalcolmson91}.

We recall that a monoid is orderable if there exists a total order
that satisfies condition \eqref{eq:orderedcondition}. Hence ordered
groups are ordered monoids.

\begin{lem}\label{lem:extfreegroupalgebra}
Let $R$ be a ring with prime subring  $P$ (i.e. generated by $1$).
Let $M$ be a submonoid of $U(R)$ and let $C$ be a subring of $R$.
Suppose that the following conditions hold true
\begin{enumerate}[\rm(1)]
\item The algebra generated by $P$ and $M$ is the monoid
algebra $P[M]$.
\item For each $c\in C$ and $f\in P[M]$, the equality $cf=0$ implies
that $c=0$ or $f=0$.
\item The elements of $M$ commute with the elements of $C$.
\item $M$ is an orderable monoid.
\item $M$ embeds in some group $H$ such that $H(M)$, the subgroup of
$H$ generated by $M$, has trivial center.
\end{enumerate}
Then the subring of $R$ generated by $C$ and $M$ is the monoid ring
$C[M]$.
\end{lem}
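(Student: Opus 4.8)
The plan is to show that the canonical ring epimorphism $C[M]\twoheadrightarrow R$ sending $\sum_m c_m m$ to $\sum_m c_m m$ is injective; by hypothesis~(3) this map is well defined and its image is exactly the subring of $R$ generated by $C$ and $M$, so injectivity is precisely the assertion to be proved. Equivalently, I want to show that $M$ is left linearly independent over $C$ in $R$: whenever $\sum_{i=1}^{n}c_im_i=0$ with $c_1,\dots,c_n\in C$ and $m_1,\dots,m_n\in M$ pairwise distinct, all $c_i=0$. Assume this fails and fix a counterexample with $n$ as small as possible and every $c_i\neq 0$. Since each $m_i$ is a unit of $R$ and $m_1\in P[M]$, hypothesis~(2) already excludes $n=1$, so $n\ge 2$, and I will derive a contradiction with the minimality of $n$.

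The engine of the argument is the manipulation used in \cite[Lemma~1]{MakarMalcolmson91}: multiply the relation on the left and on the right by an element $w\in M$ and subtract. Because $w$ commutes with every $c_i$ by hypothesis~(3), this yields $\sum_{i=1}^{n}c_i(wm_i-m_iw)=0$, a new relation all of whose monomials $wm_i$ and $m_iw$ again lie in $M$, and in which the $i$-th term disappears exactly when $w$ commutes with $m_i$. The favourable case is when $w$ can be chosen so as to commute with every $m_i$ except a single one, say $m_2$: then the relation collapses to $c_2(wm_2-m_2w)=0$, and since $wm_2\ne m_2w$ are distinct elements of the $P$-basis $M$ of $P[M]$, the factor $wm_2-m_2w$ is a nonzero element of $P[M]$, so hypothesis~(2) forces $c_2=0$, a contradiction. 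In general I expect to reach such a situation by a step-by-step reduction: hypothesis~(5) guarantees that a useful $w$ exists, since the distinct quotients $m_im_1^{-1}$ are non-central and therefore not every element of $M$ can commute with all of them, while hypothesis~(4) (an order on $M$, under which $P[M]$ is in particular a domain) is used to keep track of leading monomials and to prevent accidental cancellation among the surviving terms, so that after enough applications one is left either with few enough terms to invoke hypothesis~(2) directly, or with a nontrivial relation of length strictly less than $n$.

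The main obstacle is that conjugation and ``division'' do not respect $M$: the natural home of $wm_iw^{-1}$ or of $m_im_1^{-1}$ is the subgroup $\langle M\rangle$ of $U(R)$ generated by $M$, and the group $H$ of hypothesis~(5) is a priori a third, still different, group; so ``non-central in $H(M)$'' must be translated into an actual failure of commutation with a specific $m_i$ inside $R$, and ``shortest relation over $M$'' must be promoted to information about relations over $\langle M\rangle$. Concretely, I anticipate one must first strengthen hypothesis~(1) from ``$P[M]$ embeds in $R$'' to ``$P[\langle M\rangle]$ embeds in $R$'' — equivalently, identify $\langle M\rangle$ with the universal group of $M$ — which is exactly where orderability of $M$ does real work, since it governs the relationship between an orderable monoid and the group it generates; only afterwards can the triviality of the centre of $H(M)$ be transported into $R$ and used to ensure that the conjugation-and-subtract step genuinely shortens the relation.

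I expect essentially all of the difficulty to be concentrated in this passage between $M$, the group $\langle M\rangle\le U(R)$, and the abstract group $H$ of hypothesis~(5), together with the combinatorial bookkeeping needed to guarantee that the reduction terminates; once these are in hand, the Makar–Malcolmson-style manipulation above, anchored by hypothesis~(2) (which also absorbs the fact that $R$ is not assumed to be a domain), closes the proof by contradiction with the minimality of $n$.
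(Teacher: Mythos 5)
There is a genuine gap, and it sits exactly where you placed your hopes: the reduction step. The paper's engine is not the one-sided commutator $\sum_i c_i(wm_i-m_iw)=0$ but the two-sided sandwich: from $\sum_{i=1}^n c_if_i=0$ one forms $\sum_{i=1}^n c_i(f_igf_1-f_1gf_i)=0$ for an \emph{arbitrary} $g\in P[M]$. The $i=1$ term vanishes identically, so this is a kernel element with at most $n-1$ summands \emph{regardless of any commutation properties}, and minimality forces $f_igf_1=f_1gf_i$ for all $i$ and all $g$. No special $w$ commuting with all but one of the $m_i$ has to be produced, and hypothesis (5) plays no role at this stage. Your version has two unresolved problems: (a) each surviving term $c_i(wm_i-m_iw)$ contributes two monomials, so the rewritten relation can be \emph{longer} than $n$ and minimality gives nothing — you never show the process terminates or shortens anything; (b) producing a $w\in M$ that commutes with all $m_i$ except exactly one is not something hypothesis (5) delivers (it only says non-identity elements of $H(M)$ fail to commute with \emph{some} element of $H(M)$, not of $M$). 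Relatedly, the right object to minimize over is not a relation $\sum c_im_i=0$ among monomials but the number of simple tensors in a kernel element of $\mu\colon C\otimes_P P[M]\to R$, with the $f_i$ arbitrary elements of $P[M]$: the final step of the paper replaces $f_i$ by $f_i-m_if_1$ with $m_i\in P$ (so one stays in the same framework and inducts on the greatest element of the support), which your monomial formulation cannot accommodate.

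Your proposed repair — strengthening (1) to an embedding of $P[\langle M\rangle]$ into $R$ so as to make sense of $m_im_1^{-1}$ and $wm_iw^{-1}$ inside $R$ — is neither available from the hypotheses nor needed. The paper uses (4) and (5) purely combinatorially: once $f_igf_1=f_1gf_i$ is known for all $g\in P[M]$, comparing the greatest elements of the supports (this is where the order from (4) enters) yields the identity $f_{i0}mf_{10}=f_{10}mf_{i0}$ in the monoid $M$ for all $m\in M$; this identity is then read inside the abstract group $H$ of (5), where one may invert to conclude that $f_{10}^{-1}f_{i0}$ is central in $H(M)$ and hence trivial, so $f_{i0}=f_{10}$. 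At no point is $\langle M\rangle\le U(R)$, or a copy of $P[\langle M\rangle]$ in $R$, required. So while your overall outline (minimal counterexample, a commutation trick, leading-term analysis via the order, and the trivial-center hypothesis forcing equal leading monomials) matches the paper in spirit, the concrete mechanism you propose does not close, and the obstacles you flag are resolved in the paper by a different and more economical route.
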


\begin{proof}
Consider the multiplication map $\mu\colon
C\otimes_{P}P[M]\rightarrow R$. By (3), to prove that $C[M]$ is
contained in $R$, it is enough to show that $\mu$ is an injective
map.

Suppose that $\mu(\sum_{i=1}^n c_i\otimes
f_i)=\sum_{i=1}^nc_if_i=0$, where $n$ is the minimal number of
nonzero summands. If $n=1$, then $c_1f_1=0$ implies that $c_1\otimes
f_1=0$ because of (2). If $n>1$, let $g\in P[M]$ be arbitrary. By
(3), the elements of $C$ and $P[M]$ commute. Hence $\mu(\sum_{i=2}^n
c_i\otimes(f_igf_1-f_1gf_i))=0$. Since this element of the kernel
has $n-1$ summands we must have
\begin{equation}\label{eq:monoidalgebra1}
f_igf_1=f_1gf_i, \ i\in\{1,\dotsc,n\},\ g\in P[M].
\end{equation}

Fix an ordering $<$ of the monoid $M$ such that $(M,<)$ is an
ordered monoid. Let $f_{i0},g_0$ be the greatest elements of $\supp
f_i$ and $\supp g$ respectively. Then \eqref{eq:monoidalgebra1}
implies that
\begin{equation}\label{eq:monoidalgebra2}
f_{i0}mf_{10}=f_{10}mf_{i0}, \ i\in\{1,\dotsc,n\},\ m\in M.
\end{equation}
Now we look at \eqref{eq:monoidalgebra2} as an equality in $H(M)$.
Letting  $m=1$, $f_{10}^{-1}f_{i0}=f_{i0}f_{10}^{-1}$ for
$i\in\{1,\dotsc,n\}$. This and \eqref{eq:monoidalgebra2} tells us
that $f_{10}^{-1}f_{i0}$ is in the center of $H(M)$ for
$i\in\{1,\dotsc,n\}$. Hence $f_{10}=f_{i0}$ for $i\in\{1,\dotsc,n\}$
because $H(M)$ has trivial center.
 Therefore, for some $m_i\in P$,  the greatest element
in $\supp (f_i-m_if_1)$ is smaller than $f_{i0}$, but we still have
$(f_i-m_if_1)gf_1=f_1g(f_i-m_if_1)$ for each $g$ and $i=1,\dotsc,
n$. Thus $f_i=m_if_1$ for each $i$ and $\sum_{i=1}^n c_i\otimes
f_i=(\sum_{i=1}^nc_im_i)\otimes f_1$ reducing to the case of $n=1$.
This shows that $\ker \mu=0$.
\end{proof}

Suppose that $R$ is a domain (a ring without zero divisors), that
$C$ is a central subfield of $R$, and that $M$ is either the free
group or the free monoid. The free group is known to be orderable,
see for example \cite{Fuchs}. Hence all conditions in
Lemma~\ref{lem:extfreegroupalgebra} are satisfied. Thus we obtain
Lemma~\ref{lem:freegroupalgebraovercentre}. It is
\cite[Lemma~1]{MakarMalcolmson91} and
\cite[Lemma~2.1]{GoncalvesShirvani}.

\begin{lem}\label{lem:freegroupalgebraovercentre}
Let $R$ be a domain with prime subfield $P$. Let $C$ be any central
subfield of $R$. Let $M$ be a free submonoid (subgroup) of $U(R)$.
Then the algebra generated by $P$ and $M$ is the monoid (group)
algebra $P[M]$ if and only if the algebra generated by $C$ and $M$
is the monoid (group) algebra $C[M]$. \qed
\end{lem}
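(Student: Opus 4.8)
The plan is to deduce Lemma~\ref{lem:freegroupalgebraovercentre} directly from Lemma~\ref{lem:extfreegroupalgebra} by checking each of the five hypotheses in the two cases $M$ free monoid and $M$ free group, together with the standard fact that free groups (hence free monoids, which embed in them) are orderable. The logical shape is a biconditional, so I would split into the two implications.

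First, suppose the algebra generated by $P$ and $M$ is the monoid (group) algebra $P[M]$. We want the algebra generated by $C$ and $M$ to be $C[M]$. Here I apply Lemma~\ref{lem:extfreegroupalgebra} with $R$ the given domain, $P$ its prime subfield (which contains the prime subring, so one must be slightly careful: the lemma is stated for the prime subring, but since $C$ is a subfield, $P[M]=(\text{prime subring})[M]$ as algebras once we know freeness, so this is harmless — alternatively note $P$ itself is generated by $1$ inside $R$ only when $\mathrm{char}\,R>0$; in characteristic $0$ one first applies the lemma with the prime subring $\integers$ and then extends scalars to $\rationals=P$, which is automatic since $\rationals\otimes_\integers\integers[M]\cong\rationals[M]$ embeds in $R$). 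Condition~(1) is the hypothesis. Condition~(2) holds because $R$ is a domain and $C$ is a subfield: $cf=0$ with $c\neq 0$ forces $f=c^{-1}cf=0$. Condition~(3) holds because $C$ is central in $R$. Condition~(4): the free monoid is orderable and the free group is orderable (\cite{Fuchs}). Condition~(5): if $M$ is free of rank $\geq 1$ it embeds in a free group $H$, and the subgroup $H(M)$ generated by $M$ inside $H$ is again a nonabelian (or rank-one, handled trivially) free group, which has trivial center; if $M$ is itself the free group we take $H=M$. So Lemma~\ref{lem:extfreegroupalgebra} gives that the subring generated by $C$ and $M$ is $C[M]$, and since $C$ is a field this subring is the $C$-algebra generated by $M$.

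For the converse, suppose the $C$-algebra generated by $M$ is the monoid (group) algebra $C[M]$. Then in particular the elements of $M$ satisfy no nontrivial monoid (group) relations and no nontrivial $C$-linear relation holds among the corresponding monomials $\prod m_j$; restricting such a relation to coefficients in $P\subseteq C$ shows that the $P$-algebra generated by $M$ is free on $M$, i.e.\ equals $P[M]$. This direction needs no appeal to Lemma~\ref{lem:extfreegroupalgebra}; it is the trivial implication, since $P[M]\subseteq C[M]$ and a free structure restricts to a free structure.

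I do not expect a genuine obstacle here: the content is entirely in Lemma~\ref{lem:extfreegroupalgebra}, which has already been proved, and in the two classical facts that free groups are orderable and have trivial center. The only point requiring a sentence of care is the mismatch between "prime subring" (as in Lemma~\ref{lem:extfreegroupalgebra}) and "prime subfield" (as here) in characteristic zero; this is resolved by first applying the lemma over the prime subring and then observing that, $C$ being a field of characteristic zero, extending scalars from $\integers[M]$ to $\rationals[M]=P[M]$ inside $R$ costs nothing. Everything else is a direct verification of hypotheses, which is why the statement is recorded merely as a corollary with a \qed.
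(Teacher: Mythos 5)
Your proposal is correct and follows exactly the paper's route: the paper's entire ``proof'' is the paragraph preceding the lemma, which simply observes that all hypotheses of Lemma~\ref{lem:extfreegroupalgebra} are satisfied and invokes it, and you are in fact more careful than the paper about the (trivial) converse implication and about the prime-subring versus prime-subfield mismatch in characteristic zero. The one slip is your parenthetical that rank one is ``handled trivially'': there condition (5) genuinely fails (an infinite cyclic $H(M)$ has nontrivial center), and the statement itself is false in that case --- take $R=C=\mathbb{C}$, $P=\mathbb{Q}$ and $M=\langle \pi\rangle$, where the $P$-algebra generated by $M$ is the polynomial ring $\mathbb{Q}[\pi]\cong P[M]$ but the $C$-algebra generated by $M$ is just $\mathbb{C}$ --- so the lemma must be read, as the paper and the cited sources intend, for noncommutative free monoids and groups.
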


\medskip

 Our next result shows that there is a free group algebra
inside the Malcev-Neumann skew field of fractions provided there is
a free monoid inside the ordered group.

Let $X=\{x_i\}_{i\in I}$ be a set, $M$ the free monoid on $X$ and
$H$ the free group on $X$.  If $R$ is a ring, $R\langle\langle
X\rangle\rangle=\{\sum_{w\in M}wr_w\mid r_w\in R \textrm{ for all }
w\in M\}$ is a ring with the natural sum and the product induced
from the operations of $R$ and $M$, together with the relations
$rw=wr$, for all $r\in R$ and $w\in M$. It has been proved in
\cite[Section~2]{AraDicks} that  the group ring $R[H]$ embeds in the
power series ring via the morphism of $R$-rings $\Upsilon\colon
R[H]\rightarrow R\langle\langle X\rangle\rangle$ determined by
$\Upsilon(x_i)=1+x_i$ for all $i\in I$. For the case of
$R=\integers$ this result was proved in
\cite[Theorem~4.3]{Foxembedding} and as noted in \cite{AraDicks},
the same proof of Fox works also with any ring $R$.

\begin{lem}\label{lem:freemonoidfreegroupalgebra}
Let $(G,<)$ be an ordered group. Let $X$ be a subset of $G$
consisting of positive elements and such that $X$ is the basis of a
noncommutative free submonoid of $G$. Let $K$ be a skew field and
$K[\bar{G};\sigma,\tau]$ a crossed product. Then
$K(\bar{G};\sigma,\tau)$ contains a noncommutative free group
algebra over its center $Z$. More precisely, $\{1+\bar{x}\mid x\in
X\}$ is a basis of a free group $H$, and the free group algebra
$Z[H]$ embeds in $K(\bar{G};\sigma,\tau)$.
\end{lem}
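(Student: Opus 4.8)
Write $R=K(\bar G;\sigma,\tau)$ and let $P$ be its prime subfield; note $P\subseteq K$ and $P$ is central in $R$, hence also central in $K[\bar G;\sigma,\tau]$ and in $K((\bar G;\sigma,\tau,<))$. Let $M$ (resp.\ $H$) be the free monoid (resp.\ the free group) on $X$, so that $M\subseteq G$ in the obvious way. Each $1+\bar x$, $x\in X$, is a nonzero element of the skew field $R$, hence a unit, so $x\mapsto 1+\bar x$ extends to a group homomorphism $H\to U(R)$ and thence to a homomorphism of $P$-algebras $\Phi\colon P[H]\to R$. The whole statement reduces to showing that $\Phi$ is injective: granting this, $H':=\langle 1+\bar x\mid x\in X\rangle\le U(R)$ is freely generated by the elements $1+\bar x$ (being isomorphic, via $\Phi$, to the free group $H$) and $\Phi$ identifies $P[H']$ with a subalgebra of $R$; then Lemma~\ref{lem:freegroupalgebraovercentre}, applied to the domain $R$, its central subfield $Z$ and the free subgroup $H'$, yields that the $Z$-algebra generated by $Z$ and $H'$ equals $Z[H']$, i.e.\ $Z[H']\hookrightarrow K(\bar G;\sigma,\tau)$; and $H'$ is noncommutative since $|X|\ge 2$.

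To prove $\Phi$ injective I would compare it with the power series embedding of Fox and Ara--Dicks. That result gives an \emph{injective} ring homomorphism $\Upsilon\colon P[H]\to P\langle\langle X\rangle\rangle$ with $\Upsilon(x)=1+x$ for $x\in X$; in $P\langle\langle X\rangle\rangle$ one has $(1+x)^{-1}=\sum_{m\ge 0}(-1)^m x^m$, a series supported on $\{1,x,x^2,\dots\}\subseteq M$. The key point is that, because every $x\in X$ is \emph{positive}, the analogous identity $(1+\bar x)^{-1}=\sum_{m\ge 0}(-1)^m\bar x^{\,m}$ holds in $K((\bar G;\sigma,\tau,<))$, again with support contained in $M$. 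Consequently, for every $h\in H$ the element $\Phi(h)$ — which is the same finite product of factors $1+\bar x$ and $(1+\bar x)^{-1}$ that $\Upsilon(h)$ is of factors $1+x$ and $(1+x)^{-1}$ — has support contained in $M$.

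The comparison is then pure bookkeeping. Since $M$ is a \emph{free} submonoid of $G$, distinct words of $M$ represent distinct elements of $G$; hence for a word $w=x_{i_1}\cdots x_{i_n}\in M$ we may write $\bar x_{i_1}\cdots\bar x_{i_n}=\bar w\,t_w$ for a unit $t_w\in U(K)$ (a product of values of $\tau$) depending only on $w$. Expanding $\Phi(h)$ in $K((\bar G;\sigma,\tau,<))$ and collecting terms word by word shows that the coefficient of $\bar w$ in $\Phi(h)$ equals $t_w c^{(h)}_w$, where $c^{(h)}_w\in\integers$ is the coefficient of $w$ in $\Upsilon(h)$. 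Since $P$ is central, for any $\xi=\sum_h p_h h\in P[H]$ the coefficient of $\bar w$ in $\Phi(\xi)$ is $t_w\bigl(\sum_h p_h c^{(h)}_w\bigr)$, that is, $t_w$ times the coefficient of $w$ in $\Upsilon(\xi)$. As each $t_w$ is invertible, $\Phi(\xi)=0$ forces $\Upsilon(\xi)=0$, whence $\xi=0$ by injectivity of $\Upsilon$; thus $\ker\Phi=0$.

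The step I expect to require genuine care is this last one: one must verify that the twisting units $t_w$ arising when a product $\bar x_{i_1}\cdots\bar x_{i_n}$ is rewritten in terms of the canonical basis $\bar G$ depend on the word $w$ alone, so that the \emph{same} $t_w$ occurs inside every $\Phi(h)$ and can be pulled out of, and cancelled from, the coefficient of $\bar w$ in $\Phi(\xi)$. This is exactly where the freeness of $M$ is used (two different words never collapse to one element of $G$); the remaining ingredients — the reduction to the prime subfield through Lemma~\ref{lem:freegroupalgebraovercentre} and the quotation of the Fox/Ara--Dicks embedding — are routine.
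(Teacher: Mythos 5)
Your proof is correct and follows essentially the same route as the paper's: both reduce to the prime subfield via Lemma~\ref{lem:freegroupalgebraovercentre}, both invoke the Fox/Ara--Dicks embedding $x\mapsto 1+x$, and both rest on the observation that, by freeness of $M$, the assignment $w=x_{i_1}\cdots x_{i_n}\mapsto \bar{x}_{i_1}\cdots\bar{x}_{i_n}=\bar{w}t_w$ is a well-defined monoid morphism, so that the power-series bookkeeping over $P$ transfers verbatim into $K((\bar{G};\sigma,\tau,<))$. The paper packages this as an embedding of $P\langle\langle Y\rangle\rangle$ for finite $Y\subseteq X$ (reading off the units $\zeta_w=t_w$ from the expansion of $(1-\sum_{x\in Y}\bar{x})^{-1}$) where you compare coefficients of $\bar{w}$ directly, but the content is the same.
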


\begin{proof}
Let $P$ be the prime subfield of $K$. For any finite subset $Y$ of
$X$, let $M_Y$ denote the free submonoid of $G$ generated by $Y$.
Note that  $(1-(\sum_{x\in Y}\bar{x}))^{-1}=\sum_{w\in
M_Y}\bar{w}\zeta_w\in K((\bar{G};\sigma,\tau,<))$ where $\zeta_w\in
K\setminus\{0\}$ for each $w\in M_Y$. Hence  $M_Y$ is a well-ordered
subset of $G$. Moreover, it is easy to prove by induction on the
lengths  that
$\bar{w}_1\zeta_{w_1}\bar{w}_2\zeta_{w_2}=\overline{w_1w_2}\zeta_{w_1w_2}$
for all $w_1,w_2\in M_Y$. Thus $Y$ is the basis of a free monoid.
The elements of $P$ commute with the elements of
$\{\bar{w}\zeta_{w}\mid w\in N\}$, we obtain that the power series
rings $P\langle\langle Y\rangle\rangle$ embeds in
$K((\bar{G};\sigma,\tau,<))$ via the natural morphism of
$P$-algebras defined by $x\mapsto \bar{x}$, $x\in Y$. If we set
$H_Y$ to be the free group on $Y$, then $P[H_Y]$ embeds in
$K((\bar{G};\sigma,\tau,<))$ via $x\mapsto 1+\bar{x}$, $x\in Y$.
Observe that $1+\bar{x}\in K(\bar{G};\sigma,\tau)$, and therefore
$P[H_Y]$ embeds in $K(\bar{G};\sigma,\tau)$. Since this can be done
for any finite set $Y$ of $X$, we obtain that in fact $P[H]$ embeds
in $K(\bar{G};\sigma,\tau)$. By
Lemma~\ref{lem:freegroupalgebraovercentre}, the free group algebra
$Z[H]$  embeds in $K(\bar{G};\sigma,\tau)$.
\end{proof}

We note that $K$ could be any ring and $Z$ the centralizer of
$\{\bar{x}\mid x\in X\}$ in
Lemma~\ref{lem:freemonoidfreegroupalgebra}

\medskip

We say that a group $G$ is \emph{Ore embeddable} if  the ring
$K[\bar{G};\sigma,\tau]$ is an Ore domain for each skew field $K$
and crossed product $K[\bar{G};\sigma,\tau]$. Examples of Ore
embeddable groups are torsion-free abelian groups and torsion-free
nilpotent groups.

\begin{lem}\label{lem:extensionscalars}
Let $R$ be a ring with a skew field of fractions $\iota\colon
R\hookrightarrow F$. Let $G$ be a group. Consider a crossed product
$R[\bar{G};\sigma,\tau]$. Suppose that the automorphism
$\sigma(x)\in\Aut(R)$ extends to a (unique) automorphism
$\varsigma(x)\in \Aut(F)$ for each $x\in G$. Then
$$\kappa\colon R[\bar{G};\sigma,\tau]\hookrightarrow F[\bar{G};\varsigma,\tau]$$
by extension of scalars, that is, $\kappa_{\mid R}=\iota$ and
$\kappa(\bar{x})=\bar{x}$.

If, moreover, $R$ is an Ore domain, and $G$ is an Ore embeddable
group, then $R[\bar{G};\sigma,\tau]$ is an Ore domain with the same
Ore skew field of fractions as $F[\bar{G};\varsigma,\tau]$.
\end{lem}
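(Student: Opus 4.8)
The plan is to treat the two assertions separately: the first is a direct verification that $F[\bar{G};\varsigma,\tau]$ is a legitimate crossed product into which $R[\bar{G};\sigma,\tau]$ embeds, and the second reduces the Ore property and the skew field of fractions of $R[\bar{G};\sigma,\tau]$ to those of $R$ itself. For the first assertion, I would first note that an extension $\varsigma(x)$ of $\sigma(x)$, if it exists, is automatically unique: two ring endomorphisms of $F$ agreeing on $R$ agree on all of $F$, since the set where they agree is closed under inverses of nonzero elements, hence is a skew subfield of $F$ containing $R$, and $F$ is generated by $R$ as a skew field. Thus the hypothesis produces a well-defined $\varsigma\colon G\to\Aut(F)$, and the structure identities of $R[\bar{G};\sigma,\tau]$ transfer to $F[\bar{G};\varsigma,\tau]$: the equality of the automorphisms $\sigma(x)\sigma(y)$ and $(\text{conjugation by }\tau(x,y))\circ\sigma(xy)$ forces the corresponding equality for $\varsigma$ by the uniqueness just noted, while the $2$-cocycle identity $\tau(xy,z)\,\tau(x,y)^{\sigma(z)}=\tau(x,yz)\,\tau(y,z)$ is literally the same equation since all the $\tau$-values lie in $U(R)$ and $\varsigma(z)$ restricts to $\sigma(z)$ there. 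Hence $F[\bar{G};\varsigma,\tau]$ is an associative crossed product with identity $\bar{1}$. I would then define $\kappa$ on $R[\bar{G};\sigma,\tau]=\bigoplus_{x\in G}\bar{x}R$ by $\kappa\big(\sum_{x}\bar{x}r_x\big)=\sum_{x}\bar{x}\iota(r_x)$; additivity is clear and multiplicativity reduces, by distributivity, to $\kappa(\bar{x}r\cdot\bar{y}s)=\kappa(\bar{x}r)\kappa(\bar{y}s)$, which follows from $\bar{x}r\bar{y}s=\overline{xy}\,\tau(x,y)\,r^{\sigma(y)}s$, from $\iota(r^{\sigma(y)})=\iota(r)^{\varsigma(y)}$, and from the analogous product rule in $F[\bar{G};\varsigma,\tau]$. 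Injectivity is immediate, as $\kappa$ sends the direct summand $\bar{x}R$ into $\bar{x}\iota(R)\subseteq\bar{x}F$ via the injective $\iota$ and the summands $\bar{x}F$ are independent; and $\kappa_{\mid R}=\iota$, $\kappa(\bar{x})=\bar{x}$ hold by construction.

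For the second assertion, assume in addition that $R$ is an Ore domain and $G$ is Ore embeddable. Since $F$ is a skew field, $F[\bar{G};\varsigma,\tau]$ is then an Ore domain; let $D$ be its Ore skew field of fractions, so that $R[\bar{G};\sigma,\tau]\hookrightarrow F[\bar{G};\varsigma,\tau]\hookrightarrow D$ and, in particular, $R[\bar{G};\sigma,\tau]$ is a domain. The key step is to show that every $\phi=\sum_{x\in\supp\phi}\bar{x}f_x\in F[\bar{G};\varsigma,\tau]$ can be written as $\phi=\psi c^{-1}$ with $\psi\in R[\bar{G};\sigma,\tau]$ and $0\neq c\in R$: because $\supp\phi$ is finite and $R$ is right Ore, the finitely many coefficients $f_x\in F$ admit a common right denominator $c\in R\setminus\{0\}$, i.e. $f_xc\in\iota(R)$ for every $x$, and then $\phi c=\sum_{x}\bar{x}(f_xc)$ lies in (the image of) $R[\bar{G};\sigma,\tau]$, so $\phi=(\phi c)c^{-1}$. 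Symmetrically, using that $R$ is left Ore, one may also write $\phi=c'^{-1}\psi'$.

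From this the remaining claims follow by routine Ore bookkeeping. First, the skew subfield of $D$ generated by $R[\bar{G};\sigma,\tau]$ contains $c^{-1}$ for every $0\neq c\in R$, hence contains every $\phi=\psi c^{-1}$, i.e. all of $F[\bar{G};\varsigma,\tau]$; since $D$ is generated by $F[\bar{G};\varsigma,\tau]$ as a skew field, it is already generated by $R[\bar{G};\sigma,\tau]$. Second, $R[\bar{G};\sigma,\tau]$ is right Ore: given nonzero $p,q\in R[\bar{G};\sigma,\tau]$, the right Ore property of $F[\bar{G};\varsigma,\tau]$ yields $u,v$ with $pu=qv\neq0$; writing $u=u'c_1^{-1}$, $v=v'c_2^{-1}$ as above and replacing $c_1,c_2$ by a common right multiple $c=c_1e_1=c_2e_2\in R\setminus\{0\}$ gives $u=(u'e_1)c^{-1}$, $v=(v'e_2)c^{-1}$ with $u'e_1,v'e_2\in R[\bar{G};\sigma,\tau]$, whence $p(u'e_1)=q(v'e_2)\neq0$ is a nonzero common right multiple of $p$ and $q$ inside $R[\bar{G};\sigma,\tau]$; the left Ore condition is obtained symmetrically from the representation $\phi=c'^{-1}\psi'$. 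Therefore $R[\bar{G};\sigma,\tau]$ is an Ore domain, and by uniqueness of the Ore skew field of fractions it shares its Ore skew field of fractions, namely $D$, with $F[\bar{G};\varsigma,\tau]$.

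I do not anticipate a genuine obstacle here: the statement is essentially bookkeeping. The one point that must be stated cleanly is the automatic uniqueness of the extensions $\varsigma(x)$, which is precisely what makes $(F;\varsigma,\tau)$ into a crossed product; after that, the second part amounts to the observation that $R[\bar{G};\sigma,\tau]\subseteq F[\bar{G};\varsigma,\tau]$ is, up to the multiplicative set $R\setminus\{0\}$, an Ore localization, so $R[\bar{G};\sigma,\tau]$ inherits both the Ore property and the skew field of fractions. The only thing requiring real care is consistently tracking on which side the Ore conditions of $R$ are applied.
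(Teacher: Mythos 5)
Your proof is correct and follows essentially the same route as the paper's: the paper delegates the verification that $(\varsigma,\tau)$ still satisfies the crossed-product axioms to the fact that $\iota$ is a ring epimorphism together with \cite[Lemma~1.1]{Passman1}, and delegates the denominator-clearing argument for the Ore part to \cite[Theorem~10.28]{Lam2}, whereas you spell both steps out explicitly. No gaps.
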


\begin{proof}
First of all we construct $F[\bar{G};\varsigma,\tau]$. Consider a
free $F$-module $T$ with basis $\bar{G}$, a copy of $G$. Then each
element of $T$ is a finite sum of the form $\sum_{x\in
G}\bar{x}a_x$, where $a_x\in F$ for each $x\in G$.
 Define the
product map $\prod\colon T\times T\rightarrow T$ by
$$\Big(\sum_{x\in G}\bar{x}a_x, \sum_{x\in
G}\bar{x}b_x\Big)\longmapsto \sum_{x\in G}\bar{x}\Big(\sum_{yz=x}
\tau(y,z)a_y^{\varsigma(z)}b_z\Big).$$ Notice that $\kappa$ defines
a right $R$-module embedding of $R[\bar{G};\sigma,\tau]$ inside $T$.
Moreover, $\prod$ restricted to (the image of)
$R[\bar{G};\sigma,\tau]$ is the product in $R[\bar{G};\sigma,\tau]$.
Now, since $\iota$ is an epimorphism of rings
\cite[Proposition~4.1.1]{Cohnskew}, the conditions of
\cite[Lemma~1.1]{Passman1} are satisfied for $\varsigma$ and $\tau$.
This shows that $T$ is a ring with product defined  by the map
$\prod$, and $T=F[\bar{G};\varsigma,\tau]$. Then clearly
$R[\bar{G};\sigma,\tau]$ embeds in $F[\bar{G};\varsigma,\tau]$ via
$\kappa$, as desired.

Suppose that $R$ is an Ore domain. By the universal property of Ore
domains, any automorphism of $R$ can be extended to its Ore field of
fractions $F$. Since $G$ is Ore embeddable,
$F[\bar{G};\varsigma,\tau]$ has an Ore skew field of fractions.
Hence $R[\bar{G};\sigma,\tau]$ embeds in
$Q_{cl}(F[\bar{G};\varsigma,\tau])$, the Ore skew field of fractions
of $F[\bar{G};\varsigma,\tau]$. A classical argument shows that the
elements of $Q_{cl}(F[\bar{G};\varsigma,\tau])$ are of the form
$g^{-1}f$ ($fg^{-1}$) for certain $f,g\in R[\bar{G};\sigma,\tau]$
(see for example \cite[Theorem~10.28]{Lam2}).
\end{proof}

Following \cite{Dicks&Lewin}, let $I$ be a set. Suppose that there
is a map $I\rightarrow R((\bar{G};\sigma,\tau,<)),$ given by
$i\mapsto f_i=\sum_{x\in G}\bar x a_{ix},$ such that the following
two conditions hold:
\begin{enumerate}
\item[(1)] $\operatornamewithlimits{\cup}\limits_{i\in
I}\supp(f_i)$ is well ordered, \item[(2)] for each $x\in G$ the set
$\{i\in I\mid x\in\supp(f_i)\}$ is finite.
\end{enumerate}
Then we say that $\sum_{i\in I}f_i$ is \emph{defined in
$R((\bar{G};\sigma,\tau,<))$}. In this situation $\sum_{i\in I}f_i$
will be used to denote $\sum_{x\in G}\bar
x\Big(\sum_{\scriptscriptstyle \{i\mid x\in \supp f_i\}}
a_{ix}\Big).$ Note that $\sum_{i\in I}f_i$ is then an element of
$R((\bar{G};\sigma,\tau,<)).$

Let $I,J$ be sets, and let $\sum\limits_{i\in I}f_i$,
$\sum\limits_{i\in I}g_i$, $\sum\limits_{j\in J}h_j$ be defined in
$R((\bar{G};\sigma,\tau,<))$. As can be seen in \cite{Dicks&Lewin},
the following hold true:
\begin{enumerate}[\rm (i)]
\item For any $a\in R,$ $\sum\limits_{i\in I} f_ia$ is
defined in $R((\bar{G};\sigma,\tau,<))$ and equals $\Big(
\sum\limits_{i\in I}f_i\Big)a.$
\item $\sum\limits_{i\in I}(f_i+g_i)$ is defined in $R((\bar{G};\sigma,\tau,<))$ and
equals $\sum\limits_{i\in I}f_i + \sum\limits_{i\in I}g_i.$ \item
$\sum\limits_{(i,j)\in I\times J}(f_ih_j)$ is defined in
$R((\bar{G};\sigma,\tau,<))$ and equals $(\sum\limits_{i\in
I}f_i)(\sum\limits_{j\in J}h_j).$
\end{enumerate}

Now we use this to prove that Malcev-Neumann series ring behave very
much like crossed products.

\begin{prop}\label{prop:MNfieldsascrossedproducts}
Let $K$ be a skew field and $(G,<)$ an ordered group. Consider a
crossed product $K[\bar{G};\sigma,\tau]$ and its Malcev-Neumann
series ring $K((\bar{G};\sigma,\tau,<))$. Let $N$ be a normal
subgroup of $G$. The following statements hold true.
\begin{enumerate}[\rm (1)]
\item The crossed product  structure of $K[\bar{G};\sigma,\tau]=K[\bar{N};\sigma,\tau][\overline{G/N};\tilde{\sigma},\tilde{\tau}]$
extends to
$K(\bar{N};\sigma,\tau)[\overline{G/N};\tilde{\sigma},\tilde{\tau}]$,
and to
$K((\bar{N};\sigma,\tau,<))[\overline{G/N};\tilde{\sigma},\tilde{\tau}]$
in the natural way. Therefore
\begin{eqnarray*}
K[\bar{N};\sigma,\tau][\overline{G/N};\tilde{\sigma},\tilde{\tau}] &
\hookrightarrow &
K(\bar{N};\sigma,\tau)[\overline{G/N};\tilde{\sigma},\tilde{\tau}]
\\ & \hookrightarrow &
K((\bar{N};\sigma,\tau,<))[\overline{G/N};\tilde{\sigma},\tilde{\tau}]
\\ & \hookrightarrow  & K((\bar{N};\sigma,\tau,<))((\overline{G/N};\tilde{\sigma},\tilde{\tau},\prec)).
\end{eqnarray*}
\item If the group $G/N$ is Ore embeddable, then $K(\bar{G};\sigma,\tau)$ is the Ore field of
fractions of
$K(\bar{N};\sigma,\tau)[\overline{G/N};\tilde{\sigma},\tilde{\tau}]$.

\item If $N$ is a convex normal subgroup of $G$, then moreover
\begin{enumerate}[\rm (i)]
\item $K((\bar{G};\sigma,\tau,<))=K((\bar{N};\sigma,\tau,<))((G/N;\tilde{\sigma},\tilde{\tau},\prec))$.

\item $K(\bar{G};\sigma,\tau)=K(\bar{N};\sigma,\tau)(\overline{G/N};\tilde{\sigma},\tilde{\tau})$, that is,
$K(\bar{G};\sigma,\tau)$ is the Malcev-Neumann skew field of
fractions of
$K(\bar{N};\sigma,\tau)[\overline{G/N};\tilde{\sigma},\tilde{\tau}]$.
\end{enumerate}
\end{enumerate}
\end{prop}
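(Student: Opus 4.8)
The plan is to treat the three parts essentially in the order (1), then (2), then (3), since each builds on its predecessor. For part (1), I would first recall from Section~\ref{sec:crossedproducts} that $K[\bar G;\sigma,\tau]=S[\overline{G/N};\tilde\sigma,\tilde\tau]$ with $S=K[\bar N;\sigma,\tau]$. To extend the crossed product structure to $K(\bar N;\sigma,\tau)$ and then to $K((\bar N;\sigma,\tau,<))$, the key observation is that each $\tilde\sigma(\alpha)$ is (conjugation by $\bar x_\alpha$, hence) an automorphism of $S$; by Lemma~\ref{lem:extensionscalars} it extends uniquely to the skew field of fractions $K(\bar N;\sigma,\tau)$, and one checks directly that it also extends to the Malcev-Neumann series ring $K((\bar N;\sigma,\tau,<))$ (conjugation by $\bar x_\alpha$ carries well-ordered supports to well-ordered supports, since $x_\alpha$ normalises $N$ and left/right translation in the ordered group $N$ is order-preserving up to a shift). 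Since $\tilde\tau$ takes values in $U(S)\subseteq U(K(\bar N;\sigma,\tau))\subseteq U(K((\bar N;\sigma,\tau,<)))$, and the cocycle identities for $(\tilde\sigma,\tilde\tau)$ already hold in $S$ and therefore persist in any overring, the crossed products $K(\bar N;\sigma,\tau)[\overline{G/N};\tilde\sigma,\tilde\tau]$ and $K((\bar N;\sigma,\tau,<))[\overline{G/N};\tilde\sigma,\tilde\tau]$ are well defined and contain $K[\bar G;\sigma,\tau]$. The final inclusion into $K((\bar N;\sigma,\tau,<))((\overline{G/N};\tilde\sigma,\tilde\tau,\prec))$ is the tautological one sending a finite sum over $\overline{G/N}$ to itself.

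For part (2): by part (1) we have $K[\bar G;\sigma,\tau]\subseteq A:=K(\bar N;\sigma,\tau)[\overline{G/N};\tilde\sigma,\tilde\tau]\subseteq K(\bar G;\sigma,\tau)$, so it suffices to show $A$ is an Ore domain whose Ore skew field of fractions is $K(\bar G;\sigma,\tau)$. Now $A$ is a crossed product of the Ore embeddable group $G/N$ over the skew field $K(\bar N;\sigma,\tau)$, hence is an Ore domain by the definition of Ore embeddability, with an Ore skew field of fractions $Q_{cl}(A)$. On the one hand $A\subseteq K(\bar G;\sigma,\tau)$ and $K(\bar G;\sigma,\tau)$ is a skew field, so $Q_{cl}(A)\hookrightarrow K(\bar G;\sigma,\tau)$ by the universal property of Ore localisation; on the other hand $K[\bar G;\sigma,\tau]\subseteq A\subseteq Q_{cl}(A)$ and $Q_{cl}(A)$ is a skew field inside (a skew field containing) $K[\bar G;\sigma,\tau]$, so $K(\bar G;\sigma,\tau)\subseteq Q_{cl}(A)$ by minimality of the skew field of fractions. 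Hence $Q_{cl}(A)=K(\bar G;\sigma,\tau)$.

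For part (3), the extra hypothesis is that $N$ is convex, so $(G/N,\prec)$ is an ordered group and, crucially, every $x_\alpha$ with $\alpha\neq 1$ dominates every element of $\bar N$ in the sense that $\supp$ of a product decomposes cleanly along cosets. For (i) the inclusion $\supseteq$ is immediate from part (1)'s last line; for $\subseteq$, take $f=\sum_{x\in G}\bar x a_x\in K((\bar G;\sigma,\tau,<))$ and group its support by cosets of $N$: write $f=\sum_{\alpha\in G/N}\bar x_\alpha f_\alpha$ where $f_\alpha=\sum_{n\in N}\bar n a'_{x_\alpha n}\in K((\bar N;\sigma,\tau,<))$; convexity of $N$ forces each $\supp f_\alpha$ to be well ordered in $N$ and the set of $\alpha$ occurring to be well ordered in $(G/N,\prec)$ (a non-well-ordered configuration upstairs would, by convexity, produce one in $G/N$ or in some fibre), and by the facts on infinite sums $\sum_{i\in I}f_i$ quoted before the proposition this identifies $f$ with an element of $K((\bar N;\sigma,\tau,<))((\overline{G/N};\tilde\sigma,\tilde\tau,\prec))$. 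Finally (ii) follows by combining (i) with the observation that inside the ring of (i) the skew field generated by $K[\bar G;\sigma,\tau]$ equals the skew field generated by $K(\bar N;\sigma,\tau)[\overline{G/N};\tilde\sigma,\tilde\tau]$ (as $K(\bar N;\sigma,\tau)$ is already generated by $K[\bar N;\sigma,\tau]$), which is by definition $K(\bar N;\sigma,\tau)(\overline{G/N};\tilde\sigma,\tilde\tau)$.

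The main obstacle I expect is the well-ordering bookkeeping in part (3)(i): showing that grouping a well-ordered subset of $G$ by cosets of the convex subgroup $N$ yields a well-ordered index set in $G/N$ together with well-ordered fibres in $N$, and conversely. This is where convexity is genuinely used — without it the coset decomposition can fail to respect well-ordering — and it is the one place where a careful argument, rather than a formal manipulation, is required; everything else is either a direct appeal to Lemma~\ref{lem:extensionscalars}, to the definition of Ore embeddability, or to the universal/minimality properties of Ore and Malcev-Neumann skew fields of fractions.
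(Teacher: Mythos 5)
Your overall route is the same as the paper's: extend the crossed-product structure coefficient-wise for (1), sandwich the Ore localisation between $K[\bar G;\sigma,\tau]$ and $K(\bar G;\sigma,\tau)$ and use minimality for (2), and do the coset-decomposition bookkeeping with convexity for (3). Parts (2) and (3) are fine as you present them, and you correctly identify the well-ordering argument in (3)(i) as the place where convexity is genuinely needed (in fact it is only needed to show that the set of cosets $\alpha$ with $f_\alpha\neq 0$ is well ordered in $(G/N,\prec)$; the well-ordering of each fibre $\supp f_\alpha$ follows already from $x_\alpha n_1>x_\alpha n_2>\dotsb$ without convexity).

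The one genuine gap is in part (1), where you invoke Lemma~\ref{lem:extensionscalars} to extend $\tilde\sigma(\alpha)$ to $K(\bar N;\sigma,\tau)$. That lemma does not do this: it \emph{assumes} that each automorphism of the base ring extends to the skew field of fractions, and it only \emph{produces} such an extension in the Ore case via the universal property of Ore localisation. Here $S=K[\bar N;\sigma,\tau]$ need not be Ore, and $K(\bar N;\sigma,\tau)$ is defined as the skew subfield of $K((\bar N;\sigma,\tau,<))$ generated by $S$, which has no universal property you can appeal to. So you must check directly that conjugation by $\bar x_\alpha$ maps $K(\bar N;\sigma,\tau)$ onto itself. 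The paper does this by writing $K(\bar N;\sigma,\tau)=\bigcup_{i\geq 0}S_i$, where $S_0=S$ and $S_{i+1}$ is generated by $S_i$ together with the inverses of its nonzero elements, and inducting: if conjugation preserves $S_i$, then $\bar x_\alpha^{-1}f^{-1}\bar x_\alpha=(\bar x_\alpha^{-1}f\bar x_\alpha)^{-1}\in S_{i+1}$ for $0\neq f\in S_i$, and surjectivity onto $S_{i+1}$ follows from the analogous identity for $\tilde\sigma(\alpha)^{-1}$. (The extension to the full series ring $K((\bar N;\sigma,\tau,<))$, which you handle by noting that conjugation is an order automorphism of $N$, is the easy half.) With this filtration argument substituted for the appeal to Lemma~\ref{lem:extensionscalars}, your proof matches the paper's.
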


\begin{proof}
Fix a transversal $\{x_\alpha\mid \alpha\in G/N\}$ of $N$ in $G$
with $x_1=1$. Set $\bar{\alpha}=x_\alpha$ for each $\alpha\in G/N$.
Set $S=K[\bar{G};\sigma,\tau]$. Consider the structure of
$K[\bar{G};\sigma,\tau]$ as a crossed product
$S[G/N;\tilde{\sigma},\tilde{\tau}]$ as in
Section~\ref{sec:crossedproducts}.

\smallskip

Given $\alpha\in G/N$, $\tilde{\sigma}(\alpha)\colon S\rightarrow S$
is defined by $y\mapsto \bar{x}_\alpha^{-1} y\bar{x}_\alpha$. It can
be clearly extended to an automorphism of
$K((\bar{N};\sigma,\tau,<))$. We now prove that
$\tilde{\sigma}(\alpha)$ can also be extended to an automorphism of
$K(\bar{N};\sigma,\tau)$. First note that
$K(\bar{N};\sigma,\tau)=\cup_{i\geq 0}S_i$ where $S_0=S$, and for
$i\geq 0$, $S_{i+1}$ is the subring generated by $S_i$ and the
inverses of the nonzero elements of $S_i$. Of course
$\tilde{\sigma}(\alpha)$ gives an isomorphism of $S_0$. Suppose that
$i$ is such that $\tilde\sigma(\alpha)$ gives an isomorphism of
$S_i$. Now notice that if $f\in S_i\setminus\{0\}$,
$\bar{x}_\alpha^{-1} f^{-1}\bar{x}_\alpha=(\bar{x}_\alpha^{-1}
f\bar{x}_\alpha)^{-1}\in S_{i+1}$. Hence $\tilde{\sigma}(\alpha)
(S_{i+1})\subseteq S_{i+1}$, and clearly it is injective. Also
$f^{-1}=\bar{x}_\alpha^{-1} (\bar{x}_{\alpha}
f^{-1}\bar{x}_\alpha^{-1}) \bar{x}_\alpha=\bar{x}_\alpha^{-1}
(\tilde{\sigma}(\alpha)^{-1}(f))^{-1} \bar{x}_\alpha\in S_{i+1}$.
Therefore, $\tilde{\sigma}(\alpha)$ can be seen as an isomorphism of
$K(\bar{N};\sigma,\tau)$. We will denote these extensions of
$\tilde{\sigma}(\alpha)$ again as $\tilde{\sigma}(\alpha)$

\smallskip

Note that the elements of the set $\{\bar{\alpha}\mid \alpha\in
G/N\}$ are  linearly independent over $K((\bar{N};\sigma,\tau,<))$,
and hence over $K(\bar{N};\sigma,\tau)$.

Let $A=\sum_{\alpha\in G/N}f_\alpha \bar{\alpha}$ and
$B=\sum_{\alpha\in G/N}g_\alpha\bar{\alpha}$ be defined in
$K((\bar{G};\sigma,\tau,<))$ where $f_\alpha,g_\alpha\in
K((\bar{N};\sigma,\tau,<))$ (or $f_\alpha,g_\alpha\in
K(\bar{N};\sigma,\tau)$) for all $\alpha\in G/N$. Then, by the
discussion previous to the statement of
Proposition~\ref{prop:MNfieldsascrossedproducts},
\begin{eqnarray}\label{eq:seriesoperations}
A+B=\sum_{\alpha\in G/N}\bar{\alpha}(f_\alpha+g_\alpha),\quad A\cdot
B=\sum_{\alpha\in
G/N}\bar{\alpha}\Big(\sum_{\beta\gamma=\alpha}\tilde{\tau}(\beta,\gamma)
f_\beta^{\tilde{\sigma}(\gamma)} g_\gamma\Big).
\end{eqnarray}

Observe that if the sets $\supp_{G/N} A=\{\alpha\mid f_\alpha\neq
0\}$ and $\supp_{G/N} B=\{\alpha\mid g_\alpha\neq 0\}$ are finite,
then $A$ and $B$ are always defined in $K((\bar{G};\sigma,\tau,<))$.
Therefore (1) is proved.

To prove (2), note that $K(\bar{G};\sigma,\tau)$ is the skew
subfield of $K((\bar{G};\sigma,\tau,<))$ generated by
$S[G/N;\tilde{\sigma},\tilde{\tau}]$. The skew field
$K(\bar{G};\sigma,\tau)$ clearly contains $K(\bar{N};\sigma,\tau)$
and the set $\{\bar{\alpha}\mid \alpha\in G/N\}$, thus it contains
$K(\bar{N};\sigma,\tau)[G/N;\tilde{\sigma},\tilde{\tau}]$, and hence
$K(\bar{G};\sigma,\tau)$ contains the Ore field of fractions of
$K(\bar{N};\sigma,\tau)[\overline{G/N};\tilde{\sigma},\tilde{\tau}]$.
On the other hand,
$K(\bar{N};\sigma,\tau)[\overline{G/N};\tilde{\sigma},\tilde{\tau}]$
contains
$S[\overline{G/N};\tilde{\sigma},\tilde{\tau}]=K[\bar{G};\sigma,\tau]$.

\medskip
Now we prove (3)(i). For each $x\in G$ there exist unique $\alpha\in
G/N$ and $n_\alpha\in N$ such that $x=x_\alpha n$.

Let $f=\sum_{x\in G}\bar{x}a_x\in K((\bar{G};\sigma,\tau,<))$ where
$a_x\in K$ for each $x\in G$. For each $x\in G$,
$\bar{x}a_x=\bar{\alpha}\bar{n}\zeta_{n\alpha}$ for some
$\zeta_{n\alpha}\in K$. For each $\alpha\in G/N$, let
$f_\alpha=\sum_{n\in N} \bar{n}\zeta_{n\alpha}$. Then $f_\alpha\in
K((\bar{N};\sigma,\tau,<))$. Otherwise if
$n_1>n_2>\dotsb>n_r>\dotsb$ is a strictly descending chain of
elements in $\supp f_\alpha$, then $x_\alpha n_i\in\supp f$, and
$x_\alpha n_1>x_\alpha n_2>\dotsb>x_\alpha n_r>\dotsb$, a
contradiction. Also, the set $\{\alpha\in G/N\mid f_\alpha\neq 0\}$
is well ordered. Otherwise, if
$\alpha_1>\alpha_2>\dotsb>\alpha_r>\dotsb$, there is
$x_i=x_{\alpha_i}n_i\in\supp f$. Then, by the convexity of $N$,
$g_1>g_2>\dotsb>g_r>\dotsb$, a contradiction.

Hence $\sum_{\alpha\in G/N} \bar{\alpha}f_\alpha$ is defined in
$K((\bar{G};\sigma,\tau,<))$ and equals $f$.

On the other hand, given a well ordered set $\Delta\subseteq G/N$,
and series  $f_\alpha=\sum_{n\in N}\bar{n}\zeta_{n\alpha}\in
K((\bar{N};\sigma,\tau,<))$ for each $\alpha\in\Delta$. Then it is
not very difficult to see that the series
$\sum_{\alpha\in\Delta}\bar{\alpha}f_\alpha$ is defined in
$K((\bar{G};\sigma,\tau,<))$. Therefore
$K((\bar{G};\sigma,\tau,<))=K((\bar{N};\sigma,\tau,<))((\overline{G/N};\tilde{\sigma},\tilde{\tau},\prec))$
as sets. But \eqref{eq:seriesoperations} shows that they are equal
as rings.

To show (3)(ii) proceed as in the proof of (2).
\end{proof}

We remark that
Proposition~\ref{prop:MNfieldsascrossedproducts}(3)(ii) is an
important particular case of \cite{Hughes2}, but the proof here is
much easier.


\section{Main result}\label{sec:proofofmainresult}

The next theorem is the main result of the paper. This section is
devoted to prove it.

\begin{theo}\label{theo:mainresult}
Let $(G,<)$ be a noncommutative ordered group. Let $K$ be a skew
field, and let $K[G]$ be the group ring over $K$.  Then $K(G)$
contains noncommutative free group algebras over its center.
\end{theo}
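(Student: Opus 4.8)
The plan is to reduce the general statement to three more tractable cases according to the structure of the convex subgroup chain of $(G,<)$, and to handle each case using the technical results of Section~\ref{sec:usefulresults}. By Lemma~\ref{lem:freemonoidfreegroupalgebra}, it suffices in many situations to exhibit a noncommutative free submonoid of $G$ whose generators are positive; so the overall strategy is: either find such a free monoid directly, or reduce to a situation where an already-known result applies. Since $(G,<)$ is noncommutative, there exist $a,b\in G$ with $ab\neq ba$; replacing them by suitable conjugates and using the order, I may assume $a,b$ are positive. The key question is how the convex jumps $(N_a,H_a)$ and $(N_b,H_b)$ interact, since the quotients $H/N$ at a convex jump embed in $(\mathbb{R},+)$ and are therefore abelian — so noncommutativity must ``come from'' the way several jumps sit together.

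First I would dispose of what one might call \emph{type 1}: the case where the noncommutativity is already visible inside a single convex subgroup that is torsion-free nilpotent, or more generally where $G$ has a convex normal subgroup $N$ with $G/N$ Ore embeddable and $N$ (say) contributing the noncommutativity. Here I would use Proposition~\ref{prop:MNfieldsascrossedproducts}: a convex normal subgroup $N$ gives $K(G)=K(\bar N)(\overline{G/N};\tilde\sigma,\tilde\tau)$, and Lemma~\ref{lem:extensionscalars} together with Ore embeddability lets me pass the free group algebra up from $K(\bar N)$, treating the coefficient field $K(\bar N)$ as a new skew field $F$ and $\overline{G/N}$ as a new ordered group. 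Iterating, the problem reduces to the case that $G$ is, up to this filtration, torsion-free nilpotent, for which the free group algebra was constructed in \cite{FigueiredoGoncalvesShirvani}; one then pulls the generators back down through the tower of Malcev-Neumann fields. The reduction to the center is handled throughout by Lemma~\ref{lem:freegroupalgebraovercentre}, so I only ever need to produce a free group algebra over the prime subfield.

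For the remaining cases (\emph{types 2 and 3}), the plan is to build a noncommutative free submonoid of $G$ out of positive elements and then invoke Lemma~\ref{lem:freemonoidfreegroupalgebra}. The idea is to pick a convex jump $(N,H)$ of $G$ at which conjugation acts nontrivially: there is $g\in G$ with $gNg^{-1}$ related to $N$ in a way that, modulo the jump, realizes a nontrivial automorphism of the archimedean quotient $H/N\hookrightarrow(\mathbb{R},+)$, i.e. multiplication by some real $r\neq 1$. Taking a positive $h\in H\setminus N$ and a positive $g$ inducing such scaling, one shows that words in $h$ and $ghg^{-1}$ (or in $h$ and $g$) cannot collapse: any relation would, on projecting to the relevant archimedean quotients and comparing ``leading exponents'' via the $r$-scaling, force an impossible arithmetic identity among sums of the form $\sum r^{k_i}$. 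This is essentially the standard ping-pong/growth estimate used to detect free monoids in groups acting on $\mathbb{R}$, and it produces a free submonoid of rank $2$ with positive generators; then Lemma~\ref{lem:freemonoidfreegroupalgebra} gives the free group algebra $Z[H']$ inside $K(G)$.

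The main obstacle I anticipate is the case analysis itself: showing that \emph{every} noncommutative ordered group falls into one of these three types, i.e. that if $G$ is not of ``nilpotent-by-Ore'' type then one can always locate a convex jump with a genuinely nontrivial (non-identity-scaling) conjugation action and extract honest positive free-monoid generators from it. Controlling the supports — making sure the elements one writes down actually lie in $G$ and that the free-monoid words have well-ordered support, as needed to apply Lemma~\ref{lem:freemonoidfreegroupalgebra} — is routine given that lemma's proof, but the combinatorial argument that a putative relation among the chosen positive elements is incompatible with the real-valued exponents at the chosen jump is where the real work lies, and it is exactly there that the hypothesis ``$G$ noncommutative'' must be used in full strength.
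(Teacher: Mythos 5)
Your overall architecture --- a trichotomy driven by the convex subgroup chain, free monoids plus Lemma~\ref{lem:freemonoidfreegroupalgebra} for the ``non-central'' cases, and a reduction to torsion-free nilpotent groups plus \cite{FigueiredoGoncalvesShirvani} for the ``central'' case --- is the paper's. But your treatment of the central case (the paper's Type~1: every convex jump $(N,H)$ satisfies $[H,G]\subseteq N$) has a real gap. You propose to locate the noncommutativity inside a convex normal subgroup $N$, find the free group algebra in $K(N)$, and ``pass it up'' through the tower $K(N)(\overline{G/N};\tilde\sigma,\tilde\tau)$, iterating until $G$ is ``up to this filtration, torsion-free nilpotent.'' That reduction does not exist: an ordered group all of whose convex jumps are central need not be locally nilpotent (the paper's remarks cite finitely generated non-locally-soluble groups in $\mathcal{C}_2$), and each archimedean quotient $H/N$ is abelian, so no single convex subgroup or single jump ``contains'' the noncommutativity. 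The paper's actual idea, which is the crux of this case, is different: take positive noncommuting $x,y$, let $(N,H)$ be the convex jump determined by $[x,y]$, set $A=\langle x,y\rangle$ and $B=N\cap A$; centrality of the jumps forces $[A,A]\subseteq H\cap A$ and $[[A,A],A]\subseteq N\cap A=B$, so $A/B$ is a noncommutative torsion-free nilpotent group of class two. The nilpotent group is thus a \emph{quotient} of a two-generator subgroup, with $B$ in general not convex, so one cannot ``pull generators down a tower''; instead one pulls them back through the ring morphism $\Phi\colon K[B]((\overline{A/B};\tilde\sigma,\tilde\tau,\prec))\rightarrow K((A/B;\prec))$ extending the augmentation (Example~\ref{ex:exampleofextension}), taking ``good preimages'' in $K[A]$ of the free-group-algebra generators supplied by \cite{FigueiredoGoncalvesShirvani} inside $Q_{cl}(K[A/B])$. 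Without the construction of this nilpotent quotient and the preimage mechanism, your Type~1 case is unproven.

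A second, smaller gap: you fold the non-normal case (the paper's Type~3) into the same ``scaling by $r$ on an archimedean quotient'' picture as Type~2. When some convex subgroup $C$ is not normal there is no induced automorphism of a fixed jump to scale by; the paper instead picks $b$ with $bCb^{-1}\varsubsetneq C$ and $a\in C\setminus bCb^{-1}$, and uses the strictly descending chain of convex jumps determined by the conjugates $b^nab^{-n}$ to show that any positive relation between $a$ and $b$ forces an element into a convex subgroup it cannot lie in (Lemma~\ref{lem:convexnotnormal}). Your Type~2 sketch, by contrast, does match Lemmas~\ref{lem:sumofrealnumbers} and~\ref{lem:freemonoid2ndordering} (after normalizing to $r\geq 2$ or $0<r\leq\frac{1}{2}$ by passing to a power of the acting element), and your use of Lemma~\ref{lem:freemonoidfreegroupalgebra} together with Lemma~\ref{lem:freegroupalgebraovercentre} to land in the center is exactly as in the paper.
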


The proof of Theorem~\ref{theo:mainresult} is divided in three
parts, depending on which kind of ordered group $(G,<)$ is.  We will
single out three types of ordered groups.
\begin{enumerate}[Type 1:]
\item Every convex jump in $(G,<)$ is central, that is, $[H,G]\subseteq N$ for
all convex jumps $(N,H)$ of $(G,<)$.
\item Every convex subgroup of $(G,<)$ is normal in $G$, but there exists a
convex jump $(N,H)$ which is not central, that is, $[H,G]\nsubseteq
N$.
\item There exists a convex subgroup of $(G,<)$ which is not normal in $G$.
\end{enumerate}

We note that any ordered group $(G,<)$ falls into one (and only one)
of these three types of ordered groups. This is because if $(G,<)$
is of type~1, then every convex subgroup is normal in $G$ as noted
in \cite[p.~226]{BludovGlassRhemtulla1}. Indeed, let $U$ be a convex
subgroup of $(G,<)$, $u\in U$ and $x\in G$. Consider the convex jump
$(N_u,H_u)$ determined by $u$. Then $N_u\subset H_u\subseteq U$, and
$x^{-1}ux=u[u,x]\in U$ since $[H_u,G]\subseteq N_u$ by hypothesis.

On the other hand it may happen  that there exist two total orders
$<_1,\ <_2$ of an orderable group $G$ such that $(G,<_1)$ and
$(G,<_2)$ are ordered groups of different types.

\subsection{Ordered groups of
Type~1}\label{section:oderedgroupstype1}

The following are sufficient conditions to extend morphisms of rings
 $\varphi\colon R_1\rightarrow R_2$   and morphisms of groups $\eta\colon
G_1\rightarrow G_2$  to morphisms of rings $\Phi\colon
R_1[\bar{G}_1;\sigma_1,\tau_1]\rightarrow
R_2[\bar{G}_2;\sigma_2,\tau_2]$ and $\Phi\colon
R_1((\bar{G}_1;\sigma_1,\tau_1,<))\rightarrow
R_2((\bar{G}_2;\sigma_2,\tau_2,<))$.

\begin{lem}\label{prop:conditionforextensionmorphism}
Let $\varphi\colon R_1\rightarrow R_2$ be a morphism of rings, and
$\eta\colon G_1\rightarrow G_2$ a morphism of groups. Consider
crossed product group rings $R_1[\bar{G}_1;\sigma_1, \tau_1]$ and
$R_2[\bar{G}_2;\sigma_2,\tau_2]$. Suppose that
\begin{eqnarray}
\varphi(r^{\sigma_1(x)}) & = &\varphi(r)^{\sigma_2(\eta(x))},\label{eq:conditionforextensionmorphism1}\\
\varphi(\tau_1(x,y)) & =  & \tau_2(\eta(x),\eta(y)).
\label{eq:conditionforextensionmorphism2}
\end{eqnarray}
Then the following hold true
\begin{enumerate}[\rm(1)]
\item The map $\Phi\colon R_1[\bar{G}_1;\sigma_1, \tau_1]\rightarrow R_2[\bar{G}_2;\sigma_2,\tau_2]$, defined by
\begin{equation}
\Phi\Big(\sum_{x\in G_1}\bar{x}a_x\Big)=\sum_{x\in
G_1}\overline{\eta(x)}\varphi(a_x),
\end{equation}
is a morphism of rings.
\item If, moreover, $(G_1,<_1)$, $(G_2,<_2)$ are ordered groups and $\eta\colon
G_1\rightarrow G_2$ is an injective morphism of ordered groups. Then
$\Phi\colon R_1((\bar{G}_1;\sigma_1,\tau_1,<_1))\rightarrow
R_2((\bar{G}_2;\sigma_2,\tau_2,<_2))$, the natural extension of
$\Phi$ in {\rm (1)}, defined  by
\begin{equation} \Phi\Big(\sum_{x\in G_1}\bar{x}a_x\Big)=\sum_{x\in
G_1}\overline{\eta(x)}\varphi(a_x),
\end{equation}
is a morphism of rings.
\end{enumerate}
\end{lem}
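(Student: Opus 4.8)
The plan is to verify directly that $\Phi$ respects the two ring operations, reducing everything to the defining relations of a crossed product together with the hypotheses \eqref{eq:conditionforextensionmorphism1} and \eqref{eq:conditionforextensionmorphism2}. For part (1), additivity of $\Phi$ is immediate from the componentwise definition, so the work is in checking multiplicativity. By bilinearity it suffices to check $\Phi(\bar{x}a\cdot\bar{y}b)=\Phi(\bar{x}a)\Phi(\bar{y}b)$ for $x,y\in G_1$ and $a,b\in R_1$. First rewrite $\bar{x}a=\overline{x}\,a$ and use the action rule $a\bar{y}=\bar{y}a^{\sigma_1(y)}$ to move $a$ past $\bar{y}$, then the twisting rule $\bar{x}\bar{y}=\overline{xy}\tau_1(x,y)$, obtaining $\bar{x}a\bar{y}b=\overline{xy}\,\tau_1(x,y)a^{\sigma_1(y)}b$. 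Applying $\Phi$ gives $\overline{\eta(xy)}\,\varphi(\tau_1(x,y)a^{\sigma_1(y)}b)$; on the other side, $\Phi(\bar{x}a)\Phi(\bar{y}b)=\overline{\eta(x)}\varphi(a)\cdot\overline{\eta(y)}\varphi(b)$, and the same manipulation in $R_2[\bar{G}_2;\sigma_2,\tau_2]$ turns this into $\overline{\eta(x)\eta(y)}\,\tau_2(\eta(x),\eta(y))\varphi(a)^{\sigma_2(\eta(y))}\varphi(b)$. Since $\eta$ is a group homomorphism, $\eta(xy)=\eta(x)\eta(y)$, and hypotheses \eqref{eq:conditionforextensionmorphism1}–\eqref{eq:conditionforextensionmorphism2} together with multiplicativity of $\varphi$ identify the two expressions. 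One should also note $\Phi(\bar 1)=\bar 1$, using the convention $1_{R_i[\bar G_i;\sigma_i,\tau_i]}=\bar 1$.

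For part (2), the first point is that $\Phi$ is even well defined as a map into $R_2((\bar{G}_2;\sigma_2,\tau_2,<_2))$: since $\eta$ is an injective morphism of ordered groups it is order preserving and injective, hence carries a well-ordered subset of $G_1$ to an order-isomorphic, and so well-ordered, subset of $G_2$; thus $\supp(\Phi(f))=\eta(\supp f)$ is well ordered whenever $\supp f$ is. Additivity again follows from the componentwise formulas. For multiplicativity one wants to apply $\Phi$ to the product formula $fg=\sum_{x\in G_1}\bar{x}\big(\sum_{yz=x}\tau_1(y,z)a_y^{\sigma_1(z)}b_z\big)$ and compare with the analogous formula for $\Phi(f)\Phi(g)$ in $R_2((\bar{G}_2;\sigma_2,\tau_2,<_2))$. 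Here injectivity of $\eta$ is used a second time, and more essentially: it guarantees that distinct factorizations $yz=x$ in $G_1$ map to distinct factorizations $\eta(y)\eta(z)=\eta(x)$ in $G_2$, and that no extra factorizations in $G_2$ involving elements outside $\eta(G_1)$ contribute to the coefficient at $\eta(x)$ in $\Phi(f)\Phi(g)$ — so the coefficient sums match term by term after applying \eqref{eq:conditionforextensionmorphism1}, \eqref{eq:conditionforextensionmorphism2} and the additivity and multiplicativity of $\varphi$. A small bookkeeping remark: one should check that the inner sums $\sum_{yz=x}$ are finite, which holds because the support of a Malcev–Neumann series is well ordered (a well-ordered set contains no infinite descending chain, forcing each "convolution" sum to be finite).

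I expect the only genuine subtlety to be the second point of part (2) — confirming that $\Phi$ of a product equals the product of the $\Phi$'s at the level of series, i.e.\ that passing $\Phi$ through the (possibly infinite) sums is legitimate and that the coefficient-wise identity really does follow from the finite-support / well-ordering considerations. The purely algebraic identities are routine once the crossed-product relations are unwound; the care is entirely in the support and finiteness arguments, which is precisely why $\eta$ must be an injective morphism of \emph{ordered} groups rather than merely a group homomorphism.
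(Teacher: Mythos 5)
Your proposal is correct and follows essentially the same route as the paper: a direct verification that $\Phi$ respects the convolution product, reducing everything to the crossed-product relations together with hypotheses \eqref{eq:conditionforextensionmorphism1} and \eqref{eq:conditionforextensionmorphism2}, with injectivity of the ordered-group morphism $\eta$ used to ensure well-definedness on well-ordered supports. The only cosmetic difference is that you check part (1) on monomials and then redo the convolution for part (2), whereas the paper performs the series computation once and observes it covers both cases; your extra bookkeeping on matching factorizations is implicit in the paper's calculation.
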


\begin{proof}
The map $\Phi$ in (2) is well defined because $\eta$ is an injective
morphism of ordered groups. Now the same proof works for (1) and
(2). By definition,
$\Phi(1)=\Phi(\bar{1})=\overline{\eta(1)}=\bar{1}=1$. Let
$\sum_{x\in G_1}\bar{x}a_x,\ \sum_{x\in G_1}\bar{x}b_x\in
R_1((\bar{G}_1;\sigma_1,\tau_1,<_1))$.  Then
\begin{eqnarray*}
\Phi\Big(\Big(\sum_{x\in G_1}\bar{x}a_x\Big)\Big(\sum_{x\in
G_1}\bar{x}b_x\Big)\Big) &= & \Phi\Big(\sum_{x\in
G_1}\bar{x}\Big(\sum_{yz=x}\tau_1(y,z)a_y^{\sigma_1(z)}b_z\Big)\Big) \\
&=& \sum_{x\in
G_1}\overline{\eta(x)}\Big(\sum_{yz=x}\varphi\big(\tau_1(y,z)\big)\varphi(a_y^{\sigma_1(z)})\varphi(b_z)\Big)\\
&=& \sum_{x\in
G_1}\overline{\eta(x)}\Big(\sum_{yz=x}{\tau_2}\big(\eta(y),\eta(z)\big)\varphi(a_y)^{\sigma_2(\eta(z))}\varphi(b_z)\Big)\\
&=& \Big(\sum_{x\in
G_1}\overline{\eta(x)}\varphi(a_x)\Big)\Big(\sum_{x\in
G_1}\overline{\eta(x)}\varphi(b_x)\Big)\\
&=& \Phi\Big(\sum_{x\in G_1}\bar{x}a_x\Big)\Phi\Big(\sum_{x\in
G_1}\bar{x}b_x\Big).
\end{eqnarray*}
Analogously, it can be seen that $\Phi$ is additive.
\end{proof}

The trivial situation where the crossed products
$R_1[\bar{G}_1;\sigma_1,\tau_1]$ and
$R_2[\bar{G}_2;\sigma_2,\tau_2]$ are group rings is well known: for
any morphism of rings $\varphi\colon R_1\rightarrow R_2$ and any
morphism of groups $\eta\colon G_1\rightarrow G_2$ conditions
\eqref{eq:conditionforextensionmorphism1} and
\eqref{eq:conditionforextensionmorphism2} are satisfied because
$\sigma_1$, $\sigma_2$ and $\tau_1$, $\tau_2$ are trivial.

\medskip

Although, as pointed out in \cite[p.~674]{Lichtmanuniversalfields},
the proof of
\cite[Proposition~3.1]{LichtmanOnembeddingofgroupringsofresidually}
is not correct, some minor changes give
Example~\ref{ex:exampleofextension}. It sets the situation where we
will use Lemma~\ref{prop:conditionforextensionmorphism} and it also
fixes the notation that  will be used in what follows.

\begin{ex}\label{ex:exampleofextension}
Let $G$ be a group with a normal subgroup $N$. Let $K$ be a skew
field. As in Section~\ref{sec:crossedproducts}, we look at the group
ring $K[G]$ as a crossed product
$S[G/N;\tilde{\sigma},\tilde{\tau}]$ where $S=K[N]$. Fix a
transversal $\{x_\alpha\mid \alpha\in G/N\}$ of $N$ in $G$ with
$x_1=1$. In our situation  the maps $\tilde{\sigma}$ and
$\tilde{\tau}$ are defined as follows. For each $\alpha\in G/N$,
$\tilde{\sigma}(\alpha)$ is given by $x_\alpha^{-1} r x_{\alpha}$
for all $r\in S$; for each $\alpha,\beta\in G/N$,
$\tilde{\tau}(\alpha,\beta)=n_{\alpha\beta}$ where $n_{\alpha\beta}$
is the unique element in $N$ such that
$x_{\alpha\beta}n_{\alpha\beta}=x_\alpha x_\beta$.

Let $R_1=S$, $R_2=K$, $G_1=G_2=G/N$, $\eta\colon G/N\rightarrow G/N$
be the identity, and $\varphi\colon S\rightarrow K$  the
augmentation map. Then there exists a morphism $\Phi\colon
K[G]=S[\overline{G/N};\tilde{\sigma},\tilde{\tau}]\rightarrow
K[G/N]$ extending $\varphi$ and $\eta$. Indeed,
$$\varphi(\tilde{\tau}(\alpha,\beta))=\varphi(n_{\alpha\beta})=1=\tau_2(\eta\alpha,\eta\beta),$$
$$\varphi(r^{\tilde{\sigma}(\alpha)})=\varphi(x_\alpha r x_\alpha^{-1})=\varphi (r)=\varphi(r)^{\sigma_2(\alpha)},$$
for all $\alpha,\beta\in G/N$ and $r\in S$. Thus  conditions
\eqref{eq:conditionforextensionmorphism1} and
\eqref{eq:conditionforextensionmorphism2} are satisfied.

Suppose moreover that $N$ is such that $G/N$ is orderable. If
$(G/N,\prec)$ is an ordered group, then $\Phi$ can be extended to
$\Phi\colon S((G/N;\tilde{\sigma},\tilde{\tau},\prec))\rightarrow
K((G/N;\prec))$ by
Lemma~\ref{prop:conditionforextensionmorphism}(2).

Given a series $A=\sum_{\alpha\in G/N} \alpha a_\alpha\in
K((G/N;\prec))$, we define the \emph{good preimage} $\hat{A}$ of $A$
by $\Phi$ as  $$\hat{A}=\sum_{\alpha\in G/N} \bar{\alpha}a_\alpha\in
S((\overline{G/N};\tilde{\sigma},\tilde{\tau},\prec)).$$ Note that
if $A$ is invertible in $K((G/N;\prec))$, then $\hat{A}$ is
invertible in
$S((\overline{G/N};\tilde{\sigma},\tilde{\tau},\prec))$ because
$a_{\alpha_0}\in K$ is invertible in $S$, where $\alpha_0=\min\supp
A=\min\supp\hat{A}$.

Let $P$ be a central subfield of $K$. Then  it is a central subfield
of $K((G/N;\prec))$ and
$S((\overline{G/N};\tilde{\sigma},\tilde{\tau},\prec))$. Let $I$ be
a set, and $A_i,B_i\in K((G/N;\prec))\setminus\{0\}$ for each $i\in
I$. Suppose that $\{A_iB_i^{-1}\}_{i\in I}$ generate a free group
algebra over $P$ inside $K((G/N;\prec))$. Let
$\{\hat{A}_i,\hat{B}_i\}_{i\in I}$ be the set of good preimages of
$\{A_i,B_i\}_{i\in I}$. For each $i\in I$, $\hat{A}_i\hat{B}_i^{-1}$
is an invertible element of
$S((\overline{G/N};\tilde{\sigma},\tilde{\tau},\prec))$. Then
$\{\hat{A}_i\hat{B}_i^{-1}\}_{i\in I}$ generate a free group algebra
over $P$ inside
$S((\overline{G/N};\tilde{\sigma},\tilde{\tau},\prec))$ because of
the following reasons: $\Phi$ is a morphism of rings such that
$\Phi(\hat{A}_i\hat{B}_i^{-1})=A_iB_i^{-1}$  for all $i\in I$, the
series $\hat{A}_i\hat{B}_i^{-1}$ is invertible in
$S((\overline{G/N};\tilde{\sigma},\tilde{\tau},\prec))$ for all
$i\in I$, $\Phi(a)=a$ for all $a\in P$, and the elements of $P$
commute with the elements of $\{\hat{A}_i\hat{B}_i^{-1}\}_{i\in I}$.
\qed
\end{ex}

\begin{prop}\label{prop:reductiontonilpotent}
Let $(G,<)$ be an ordered group with a normal  subgroup $N$ such
that $G/N$ is a noncommutative torsion-free nilpotent group. Let $K$
be a skew field with prime subfield $P$. Consider the group ring
$K[G]$ and the Malcev-Neumann series ring $K((G;<))$. Then $K(G)$
contains a noncommutative free group $P$-algebra.
\end{prop}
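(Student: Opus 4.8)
Since $G/N$ is a noncommutative torsion-free nilpotent group, it is orderable and Ore embeddable, so the quotient order $\prec$ makes $(G/N,\prec)$ an ordered group. The strategy is to push the known free group algebra from the torsion-free nilpotent case down to $K((G/N;\prec))$, lift it through the good preimage construction of Example~\ref{ex:exampleofextension} into $S((\overline{G/N};\tilde{\sigma},\tilde{\tau},\prec))$ with $S=K[N]$, and then recognize the ambient series ring as a piece of $K((G;<))$ so that the lifted elements land in $K(G)$.

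\medskip

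\textbf{Step 1: the nilpotent base case.} By \cite{FigueiredoGoncalvesShirvani}, the skew field $P(G/N)$ of fractions of $P[G/N]$ inside $P((G/N;\prec))$ contains a noncommutative free group $P$-algebra; say it is generated by elements of the form $A_iB_i^{-1}$ with $A_i,B_i\in P[G/N]\subseteq K[G/N]$. Viewing these inside $K((G/N;\prec))$, they still generate a free group $P$-algebra (the embedding $P((G/N;\prec))\hookrightarrow K((G/N;\prec))$ and freeness over $P$ are unaffected).

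\medskip

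\textbf{Step 2: lift via good preimages.} Apply Example~\ref{ex:exampleofextension} with this central subfield $P$: the good preimages $\hat A_i\hat B_i^{-1}$ are invertible elements of $S((\overline{G/N};\tilde\sigma,\tilde\tau,\prec))$ and generate a free group $P$-algebra there, because $\Phi(\hat A_i\hat B_i^{-1})=A_iB_i^{-1}$, the $\hat A_i\hat B_i^{-1}$ are invertible, $\Phi$ fixes $P$ pointwise, and $P$ is central. Now $S((\overline{G/N};\tilde\sigma,\tilde\tau,\prec))=K[N]((\overline{G/N};\tilde\sigma,\tilde\tau,\prec))$ embeds, by Proposition~\ref{prop:MNfieldsascrossedproducts}(1), into $K((\bar N;\cdot))((\overline{G/N};\cdot,\prec))$, which by Proposition~\ref{prop:MNfieldsascrossedproducts}(1) is just $K((G;<))$ in the case of a trivial crossed product — more directly, since $S=K[N]\subseteq K((N;<))$ and $K((G;<))=K((N;<))((\overline{G/N};\tilde\sigma,\tilde\tau,\prec))$ is not needed unless $N$ is convex; what we actually need is only the first chain of embeddings in Proposition~\ref{prop:MNfieldsascrossedproducts}(1), giving $S((\overline{G/N};\tilde\sigma,\tilde\tau,\prec))\hookrightarrow K((N;<))((\overline{G/N};\tilde\sigma,\tilde\tau,\prec))$, and then the fact (the general crossed-product decomposition $K((G;<))=K[N]$-series over $G/N$, valid since $\overline{G/N}$ is a transversal and supports push forward by well-ordering) that this last ring sits inside $K((G;<))$. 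So the $\hat A_i\hat B_i^{-1}$ are invertible elements of $K((G;<))$ generating a free group $P$-algebra.

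\medskip

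\textbf{Step 3: descend to $K(G)$.} Each $\hat A_i$ and $\hat B_i$ lies in $S[\overline{G/N};\tilde\sigma,\tilde\tau]=K[G]$, since $A_i,B_i$ are finitely supported. Hence $\hat A_i\hat B_i^{-1}\in K(G)$, and the free group $P$-algebra they generate is contained in $K(G)$. Finally, $K(G)$ is a domain with prime subfield $P$ and the $\hat A_i\hat B_i^{-1}$ generate a free submonoid (indeed free subgroup) of $U(K(G))$ over $P$, so by Lemma~\ref{lem:freegroupalgebraovercentre} they generate a free group algebra over the center $Z$ of $K(G)$ as well. This gives the noncommutative free group $Z$-algebra inside $K(G)$, which is what is claimed (stated as a $P$-algebra in the proposition, but the two are equivalent).

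\medskip

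\textbf{Main obstacle.} The delicate point is Step~2: correctly identifying $S((\overline{G/N};\tilde\sigma,\tilde\tau,\prec))$ as a subring of $K((G;<))$ and making sure the good preimages, which a priori live in a series ring over $\overline{G/N}$ with coefficients in the \emph{group ring} $K[N]$, actually become honest Malcev-Neumann series over $G$ with well-ordered support in $(G,<)$. This is a bookkeeping argument about how a well-ordered subset of $(G/N,\prec)$ together with finite supports in $N$ assembles into a well-ordered subset of $(G,<)$ — essentially the content of the last paragraph of the proof of Proposition~\ref{prop:MNfieldsascrossedproducts}(3)(i), but here without assuming $N$ convex, so one must check the relevant supports are finite (coefficients in $K[N]$, not $K((N;<))$) which makes the well-ordering automatic. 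Everything else is a routine application of the tools already assembled.
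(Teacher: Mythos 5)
Your Steps 1 and 3 and the use of good preimages match the paper's strategy, but Step 2 contains a genuine gap: you try to realize $S((\overline{G/N};\tilde\sigma,\tilde\tau,\prec))$ as a subring of $K((G;<))$, asserting that because the coefficients lie in $K[N]$ (finite supports in $N$) the assembled supports are automatically well ordered in $(G,<)$. This is false. Since $N$ is not assumed convex, there is no ``quotient order'' on $G/N$; the order $\prec$ is an arbitrary order on the torsion-free nilpotent group $G/N$, unrelated to $<$ on $G$, and a $\prec$-well-ordered family of cosets with one element of $N$ chosen in each can fail to be $<$-well-ordered in $G$. For instance, take $G=\integers\times\integers$ with the lexicographic order, $N=\integers\times\{0\}$ (not convex), transversal $x_k=(0,k)$, and the singletons $\{(-k,0)\}\subseteq N$ for $k=1,2,\dotsc$; the resulting elements $(-k,k)$ of $G$ form the strictly descending chain $(-1,1)>(-2,2)>\dotsb$. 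Consequently the series $\hat B_i^{-1}$, computed in $S((\overline{G/N};\tilde\sigma,\tilde\tau,\prec))$, need not define elements of $K((G;<))$ at all, and your identification of the ambient ring collapses. This also undermines Step 3: the free group algebra relations are verified in $S((\overline{G/N};\tilde\sigma,\tilde\tau,\prec))$, and without a common overring containing both that ring and $K(G)$ you cannot transfer them to the subring of $K(G)$ generated by $P$ and the $\hat A_i\hat B_i^{-1}$ (invertibility of $\hat B_i$ in two unrelated extensions of $K[G]$ does not by itself identify the two localizations).

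The paper closes exactly this gap by never passing through $K((G;<))$ at this stage. It uses that $G/N$ is nilpotent, hence Ore embeddable, so that $K(N)[\overline{G/N};\tilde\sigma,\tilde\tau]$ is an Ore domain whose Ore skew field of fractions is $K(G)$ (Proposition~\ref{prop:MNfieldsascrossedproducts}(2)). Since that Ore domain sits inside the skew field $K(N)((\overline{G/N};\tilde\sigma,\tilde\tau,\prec))$, the universal property of Ore localization embeds $K(G)$ into $K(N)((\overline{G/N};\tilde\sigma,\tilde\tau,\prec))$; the same skew field also contains $S((\overline{G/N};\tilde\sigma,\tilde\tau,\prec))$ by Proposition~\ref{prop:MNfieldsascrossedproducts}(1). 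All inverses of $\hat A_i,\hat B_i\in K[G]$ are then reconciled inside this single common overfield (diagram \eqref{eq:diagramnotOre}), and the free group $P$-algebra verified in $S((\overline{G/N};\tilde\sigma,\tilde\tau,\prec))$ is literally contained in $K(G)$. You should replace your Step~2 embedding into $K((G;<))$ by this Ore-localization argument; the rest of your outline then goes through.
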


\begin{proof}
Recall that if $H$ is a nilpotent group and  $L$ a skew field then
any crossed product $L[\bar{H};\sigma,\tau]$ is an Ore domain. We
denote its Ore skew field of fractions by
$Q_{cl}(L[\bar{H};\sigma,\tau])$. Thus
$Q_{cl}(L[\bar{H};\sigma,\tau])$ embeds in any skew field that
contains $L[\bar{H};\sigma,\tau]$.

Set $S=K[N]$. As in Example~\ref{ex:exampleofextension},
$K[G]=S[\overline{G/N};\tilde{\sigma},\tilde{\tau}]$.   By
Proposition~\ref{prop:MNfieldsascrossedproducts}(1),
$S[\overline{G/N};\tilde{\sigma},\tilde{\tau}]\hookrightarrow
K(N)[\overline{G/N};\tilde{\sigma},\tilde{\tau}]$. Moreover, $K(G)$
is the Ore field of fractions of
$K(N)[\overline{G/N};\tilde{\sigma},\tilde{\tau}]$ by
Proposition~\ref{prop:MNfieldsascrossedproducts}(2).

Fix an order $\prec$ on $G/N$ such that $(G/N, \prec)$ is an ordered
group. The foregoing embedding can be extended to
$S((\overline{G/N};\tilde{\sigma},\tilde{\tau},\prec))\hookrightarrow
K(N)((\overline{G/N};\tilde{\sigma},\tilde{\tau},\prec)).$ Hence we
obtain the following commutative diagram of  ring embeddings
\begin{eqnarray}\label{eq:diagramnotOre}
\xymatrix{S[\overline{G/N};\tilde{\sigma},\tilde{\tau}]\ar@{^{(}->}[rr]\ar@{^{(}->}[dd]
\ar@{^{(}->}[rd] & &
K(N)[\overline{G/N};\tilde{\sigma},\tilde{\tau}] \ar@{^{(}->}[dd]
\ar@{_{(}->}[ld]\\ &  K(G) \ar@{^{(}->}[dr] &\\
S((\overline{G/N};\tilde{\sigma},\tilde{\tau},\prec))\ar@{^{(}->}[rr]
& & K(N)((\overline{G/N};\tilde{\sigma},\tilde{\tau},\prec))& }
\end{eqnarray}

By \cite[Corollary~2.1]{FigueiredoGoncalvesShirvani},
$Q_{cl}(P[G/N])$ contains a noncommutative free group $P$-algebra.
Since $P[G/N]$ is a subring of $K[G/N]$,  $Q_{cl}(P[G/N])\subseteq
Q_{cl}(K[G/N])$. Hence there exist $A_1,B_1, A_2,B_2\in K[G/N]$ such
that $A_1B_1^{-1}, A_2B_2^{-1}$ generate a noncommutative free group
$P$-algebra  inside $Q_{cl}(K[G/N])\subseteq K((G/N;\prec))$.
Consider the morphism of rings $\Phi\colon
S((\overline{G/N};\tilde{\sigma},\tilde{\tau},\prec))\rightarrow
K((G/N;\prec))$ constructed in Example~\ref{ex:exampleofextension}.
Let $\hat{A}_i,\hat{B}_i\in
S((\overline{G/N};\tilde{\sigma},\tilde{\tau},\prec))$, $i=1,2$, be
good preimages of $A_i,B_i$ as in
Example~\ref{ex:exampleofextension}.
 Hence, by
Example~\ref{ex:exampleofextension}, they generate a noncommutative
free group $P$-algebra  inside
$S((\overline{G/N};\tilde{\sigma},\tilde{\tau},\prec))$. Observe
that $\hat{A}_i,\hat{B}_i\in K[G]$, and therefore, this free group
algebra is also inside $K(G)$ by diagram \eqref{eq:diagramnotOre}.
\end{proof}

\noindent {\bf Proof of Theorem~\ref{theo:mainresult} for ordered groups of Type~1}.
Suppose that $(G,<)$ is a noncommutative ordered group of Type~1.

Let $x,y$ be positive elements of $(G,<)$ such that $[x,y]\neq 1$.
If $z=\min\{x,y\}$ and $(N_z,H_z)$ is the convex jump determined by
$z$, then $[x,y]\in N_z$ since every convex jump is central.
Moreover, neither $x$ nor $y$ belong to $N_z$.

Let $(N,H)$ be the convex jump determined by $[x,y]$. Hence
$[x,y]\in H\setminus N$. Moreover, since $H\subseteq N_z$, neither
$x$ nor $y$ belong to $H$.

Let $A$ be the subgroup of $G$ generated by $\{x,y\}$, and let
$B=N\cap A$. First observe that $A/B\hookrightarrow G/N$ which
proves that $A/B$ is torsion free because $G/N$ is an orderable
group. Since $[x,y]\notin N$, $A/B$ is not commutative. On the other
hand $A/(H\cap A)$ is a nontrivial commutative group because
$[x,y]\in H$. Hence $[A,A]\subseteq H\cap A$. Therefore, since all
convex jumps are central, $[[A,A],A]\subseteq N\cap A=B$. Therefore
$A/B$ is a torsion-free nilpotent group of index two. By
Proposition~\ref{prop:reductiontonilpotent},  there exist
noncommutative free group algebras over the prime subfield $P$ of
$K$ inside $K(A)\subseteq K(G)$. Now
Lemma~\ref{lem:freegroupalgebraovercentre} implies the result. \qed

\subsection{Ordered groups of
Type~2}\label{sec:orderedgroupsoftype2}

The strategy to prove Theorem~\ref{theo:mainresult} in this case is
to find  a noncommutative free monoid inside ordered groups of
type~2. The following key result provides a useful criterion for
finding free monoids inside groups. It is
\cite[Corollary~2.5]{Rosenblattinvariantmeasures}.

\begin{lem}\label{lem:monoidpingponglemma}
A group $G$ contains a free monoid on two generators if and only if
there exist a nonempty subset $A\subseteq G$ and elements $a,b\in G$
such that $aA\cup bA\subseteq A$ and $aA\cap bA=\emptyset$. In this
case $a,b$ generate a free monoid. \qed
\end{lem}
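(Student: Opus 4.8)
The plan is to prove both implications directly, using only the defining property of a free monoid (uniqueness of normal forms) together with elementary cancellation in $G$.

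\textit{The ``only if'' direction.} Suppose $G$ contains a free monoid on two generators, i.e.\ there are $a,b\in G$ such that the submonoid $M$ of $G$ generated by $a$ and $b$ is free on $\{a,b\}$. I would simply take $A=M$. Since $M$ is a submonoid we have $aM\subseteq M$ and $bM\subseteq M$, so $aA\cup bA\subseteq A$; and $A=M\ni 1$ is nonempty. For disjointness, if $aw_1=bw_2$ with $w_1,w_2\in M$, then these two elements of the free monoid $M$ would have normal forms beginning with the distinct letters $a$ and $b$, contradicting uniqueness of the normal form. Hence $aA\cap bA=\emptyset$.

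\textit{The ``if'' direction.} Assume $A\neq\emptyset$, $aA\cup bA\subseteq A$ and $aA\cap bA=\emptyset$. I want to show that the canonical monoid homomorphism $\pi$ from the free monoid $F$ on two letters to $G$, sending the letters to $a$ and $b$, is injective; this is exactly the assertion that $a,b$ freely generate a free submonoid. Two preliminary observations: (a) by induction on length, $wA\subseteq A$ for every word $w$ in the letters $a,b$ (applying $aA\subseteq A$ or $bA\subseteq A$ one letter at a time); (b) no nonempty word in $a,b$ equals $1$ in $G$. Indeed, if $v=e_1e_2\cdots e_r$ with $r\geq 1$, $e_i\in\{a,b\}$, and $v=1$, then $A=vA=e_1(e_2\cdots e_rA)\subseteq e_1A\subseteq A$, so $A=e_1A$; letting $e$ denote the other generator, $eA\subseteq A=e_1A$ while $eA\cap e_1A=\emptyset$, which forces $eA=\emptyset$, contradicting $A\neq\emptyset$.

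Now take distinct words $w_1=c_1\cdots c_n$ and $w_2=d_1\cdots d_m$ in the letters $a,b$ with $w_1=w_2$ in $G$, and let $u$ be their longest common prefix. If $w_1=u$ is a proper prefix of $w_2=uv$, cancelling $u$ in $G$ gives $v=1$ with $v$ a nonempty word, contradicting (b); the symmetric case is the same. Otherwise $w_1=uv_1$, $w_2=uv_2$ with $v_1,v_2$ nonempty words whose first letters $c,d$ differ, so $\{c,d\}=\{a,b\}$; cancelling $u$ gives $v_1=v_2$ in $G$, hence $v_1A=v_2A$. Writing $v_1=cv_1'$, $v_2=dv_2'$, observation (a) gives $v_1A\subseteq cA$ and $v_2A\subseteq dA$, so $v_1A\cap v_2A\subseteq aA\cap bA=\emptyset$; but $v_1A$ is a nonempty image of the nonempty set $A$, a contradiction. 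Therefore $w_1=w_2$ as words, $\pi$ is injective, and $a,b$ generate a free monoid.

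\textit{Main obstacle.} The argument is entirely elementary, so there is no serious difficulty; the only point requiring care is observation (b), namely extracting a genuine contradiction from a relation $v=1$. This hinges on the simple but crucial fact that $A\neq\emptyset$ together with $aA\cap bA=\emptyset$ forbids $aA=A$ (and likewise $bA=A$). Everything else reduces to cancellation in $G$ and a routine prefix case-analysis.
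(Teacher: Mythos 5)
Your proof is correct. There is nothing in the paper to compare it with: the lemma is stated with an immediate \qed and attributed to \cite[Corollary~2.5]{Rosenblattinvariantmeasures}, so the paper supplies no argument of its own. What you give is a complete, self-contained version of the standard ping-pong criterion: the ``only if'' direction by taking $A$ to be the free submonoid itself and invoking uniqueness of normal forms, and the ``if'' direction by left cancellation in $G$ together with the two key observations that $wA\subseteq A$ for every positive word $w$ and that $A\neq\emptyset$ with $aA\cap bA=\emptyset$ forbids both a nonempty word equal to $1$ (your step (b), including the degenerate one-letter case) and an equality of two words with distinct leading letters. The longest-common-prefix case analysis is handled correctly, and the implicit point that $a\neq b$ is forced by $aA\cap bA=\emptyset$ and $A\neq\emptyset$ causes no trouble. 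So your argument fills in, correctly, a proof the paper delegates to a reference.
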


The following is a slight modification of
\cite[Lemma~4.16]{Rosenblattinvariantmeasures}.

\begin{lem}\label{lem:sumofrealnumbers}
Let $r\in\mathbb{R}$ with $r\geq 2$ or $0<r\leq \frac{1}{2}$. Let
there be integers $0\leq a_1<a_2<\dotsb<a_n$ and $0\leq
b_1<b_2<\dotsb < b_m$. If
\begin{equation}\label{eq:sumsofrealnumbers}
r^{a_1}+r^{a_2}+\dotsb +r^{a_n}=r^{b_1}+r^{b_2}+\dotsb r^{b_m},
\end{equation}
then $n=m$ and $(a_1,\dotsc,a_n)=(b_1,\dotsc,b_m)$.
\end{lem}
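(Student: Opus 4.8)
The statement is a uniqueness result for representations of a real number as a sum of powers of a fixed base $r$, where the exponents are strictly increasing nonnegative integers. I would prove it by a greedy/highest-exponent argument, reducing the case $0<r\le\frac12$ to the case $r\ge2$ by the substitution $s=r^{-1}\ge2$ (multiplying \eqref{eq:sumsofrealnumbers} through by $r^{a_n}s^{b_m}$, or more cleanly by replacing each exponent $a_i$ by $a_n-a_i$ and reversing order), so that it suffices to treat $r\ge2$.

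For $r\ge2$ I would argue by induction on $n+m$ (or on $\max(a_n,b_m)$). The key inequality is the following strict bound: if $0\le c_1<c_2<\dots<c_k$ are integers with $c_k\le t-1$, then $r^{c_1}+\dots+r^{c_k}<r^{t}$. Indeed $r^{c_1}+\dots+r^{c_k}\le r^{0}+r^{1}+\dots+r^{t-1}=\frac{r^{t}-1}{r-1}\le r^{t}-1<r^{t}$ using $r-1\ge1$. Applying this, suppose for contradiction that $a_n\ne b_m$ in \eqref{eq:sumsofrealnumbers}, say $a_n<b_m$, i.e. $a_n\le b_m-1$. Then the left-hand side is at most $r^{0}+\dots+r^{a_n}<r^{a_n+1}\le r^{b_m}$, which is strictly less than the right-hand side; contradiction. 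Hence $a_n=b_m$. Cancelling the common top term $r^{a_n}=r^{b_m}$ from both sides of \eqref{eq:sumsofrealnumbers} leaves an equation of the same form with strictly fewer terms on each side (and the degenerate case where one side becomes empty forces the other side to be empty too, since a sum of positive powers of $r$ is positive). The induction hypothesis then gives $n-1=m-1$ and $(a_1,\dots,a_{n-1})=(b_1,\dots,b_{m-1})$, and appending the equal top exponents finishes the induction.

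The main obstacle is not conceptual but bookkeeping: one must be careful that the strict inequality $\frac{r^{t}-1}{r-1}<r^{t}$ genuinely holds for all real $r\ge2$ (it does, since it is equivalent to $r^{t}(r-2)+1>0$), and one must handle the reduction $0<r\le\frac12$ cleanly so as not to introduce negative exponents or lose strictness — the exponent-reversal $a_i\mapsto a_n-a_i$, $b_j\mapsto b_m-b_j$ combined with multiplying \eqref{eq:sumsofrealnumbers} by $r^{-a_n}\cdot r^{a_n}$... more precisely multiplying by $(r^{-1})^{\,0}$ after factoring, keeps everything among nonnegative integer exponents of $s=r^{-1}\ge2$. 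Once that reduction is in place, the highest-exponent comparison is the whole content of the argument, and it is short. I would present it exactly in the order above: (i) reduce to $r\ge2$; (ii) prove the geometric-sum bound; (iii) force equality of top exponents; (iv) cancel and induct.
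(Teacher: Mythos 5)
Your proof is correct and follows essentially the same route as the paper's: reduce $0<r\leq\frac{1}{2}$ to $r\geq 2$ by passing to $r^{-1}$ and reversing the exponents, then force $a_n=b_m$ via the geometric-sum bound $\sum_{l=0}^{t-1}r^l=\frac{r^t-1}{r-1}\leq r^t-1$ (valid since $r-1\geq 1$), cancel the top terms, and induct on the number of summands. No substantive difference.
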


\begin{proof}
First suppose that $r\geq 2$. We proceed by induction on the maximum
number of summands in \eqref{eq:sumsofrealnumbers}. If $1=n\geq m$,
the result is clear. If $a_n\neq b_m$, suppose that $a_n>b_m$ (the
other case is proved in the same way). Then $r^{a_n}=\sum_{i=1}^m
r^{b_i}-\sum_{i=1}^{n-1}r^{a_i}\leq \sum_{i=1}^m r^{b_i}\leq
\sum_{l=0}^{b_m}r^l=\frac{r^{b_m}-1}{r-1}\leq r^{b_m}-1\leq
r^{a_n}-1,$ a contradiction. Thus $a_n=b_m$. Now we can apply
induction to obtain the result.

Suppose now that $0<\frac{1}{2}\leq r$. If $a_n\geq b_m$, we
multiply the expression by $r^{-a_n}$ to obtain
$(r^{-1})^0+(r^{-1})^{(a_n-a_{n-1})}+\dotsb+(r^{-1})^{(a_n-a_1)}=(r^{-1})^{(a_n-b_m)}+\dotsb+(r^{-1})^{(a_n-b_1)}$
and apply the case $2\leq r^{-1}$.
\end{proof}

The next lemma is basically proved in
\cite[Examples~4.19]{Rosenblattinvariantmeasures}.

\begin{lem}\label{lem:freemonoid2ndordering}
Let $(H,+)$ be a nontrivial additive sugroup of $(\mathbb{R},+)$ and
$C=\langle x\rangle$ an infinite cyclic group. Consider the
semidirect product $G=H\rtimes C$ where $x$ acts on $H$ by
multiplication by $r\in \mathbb{R}$, where either $0<r\leq
\frac{1}{2}$ or $r\geq 2$. That is, $xzx^{-1}=rz$ for all $z\in H$.
Let $t\in H\setminus\{0\}$, then $\{tx,x\}$ generate a free
noncommutative submonoid of $G$.
\end{lem}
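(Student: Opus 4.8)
The plan is to apply Lemma~\ref{lem:monoidpingponglemma} with $a=tx$, $b=x$, and a suitably chosen subset $A\subseteq G$. First I would fix notation: write elements of $G=H\rtimes C$ as pairs $(z,x^k)$ with $z\in H$, $k\in\integers$, and record the multiplication rule $(z_1,x^{k_1})(z_2,x^{k_2})=(z_1+r^{k_1}z_2,\,x^{k_1+k_2})$, which follows from $x z x^{-1}=rz$. A natural candidate for the ping-pong set is $A=\{(z,x^k)\mid k\geq 1,\ z\in S_k\}$ where $S_k$ is an appropriate subset of $H$ built from $t$; the most robust choice (mirroring Rosenblatt) is to let $A$ consist of all $(z,x^k)$ with $k\ge 1$ and $z$ expressible as a sum $\sum_{i} t\,r^{c_i}$ with $0\le c_1<c_2<\dots$ and $c_i<k$, i.e. $z$ is a nonempty subsum of $t(1+r+\dots+r^{k-1})$-type expressions with exponents bounded by $k$. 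I would then verify the two conditions of Lemma~\ref{lem:monoidpingponglemma}.

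For the inclusion $aA\cup bA\subseteq A$: if $(z,x^k)\in A$, then $b\cdot(z,x^k)=(0+r^0\cdot 0+\dots)$—more precisely $x\cdot(z,x^k)=(rz,x^{k+1})$, and since $z$ is a sum of terms $t r^{c_i}$ with $c_i<k$, $rz$ is a sum of terms $t r^{c_i+1}$ with $c_i+1<k+1$, so $(rz,x^{k+1})\in A$. Similarly $a\cdot(z,x^k)=(t,x)(z,x^k)=(t+rz,x^{k+1})$, and $t+rz$ is again a sum of terms $t r^{c}$ with exponents in $\{0\}\cup\{c_i+1\}$, all $<k+1$, hence $(t+rz,x^{k+1})\in A$. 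Here I must be slightly careful that the exponents stay distinct and that $t+rz$ really has the required form — including the edge behaviour when $z$ already involves the exponent $0$; choosing $S_k$ to allow arbitrary finite nonempty sets of distinct exponents $<k$ (not requiring them to form initial segments) makes this bookkeeping clean.

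For the disjointness $aA\cap bA=\emptyset$: suppose $(t+rz,x^{k+1})=(rz',x^{k'+1})$ with $(z,x^k),(z',x^{k'})\in A$. The second coordinates force $k=k'$, and then $t+rz=rz'$ in $H\subseteq\mathbb{R}$, i.e. $t=r(z'-z)$. Now $z$ and $z'$ are sums of the form $t\cdot(\text{distinct powers of }r)$, so after dividing by $t$ this becomes an identity $1 + r\cdot(\text{sum of powers }r^{c_i}) = r\cdot(\text{sum of powers }r^{c'_j})$, i.e. $r^0 + \sum r^{c_i+1} = \sum r^{c'_j+1}$ with all exponents nonnegative integers. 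This is precisely the situation ruled out by Lemma~\ref{lem:sumofrealnumbers} (the left side contains the exponent $0$, which cannot appear on the right since all $c'_j+1\ge 1$), giving a contradiction. I expect \emph{this} step — getting the ping-pong set $A$ defined so that both the closure condition and the disjointness reduce cleanly to the arithmetic of Lemma~\ref{lem:sumofrealnumbers}, and handling the case $0<r\le\frac12$ (where one passes to $r^{-1}\ge 2$, possibly needing the symmetric description of $A$ with $k\le -1$ or with a sign/direction flip) — to be the main obstacle; once $A$ is correctly set up, both verifications are short. Finally, by Lemma~\ref{lem:monoidpingponglemma}, $a=tx$ and $b=x$ generate a free monoid, and this monoid is noncommutative since $tx$ and $x$ do not commute (as $t\ne 0$ and $r\ne 1$, because $r\ge 2$ or $r\le\frac12$), completing the proof.
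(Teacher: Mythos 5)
Your proposal is correct and follows essentially the same route as the paper: the same ping-pong set of elements $\bigl(\sum_i t r^{c_i}\bigr)x^k$ with distinct nonnegative integer exponents, closure under left multiplication by $tx$ and $x$, and disjointness via Lemma~\ref{lem:sumofrealnumbers} because the exponent $0$ occurs in $txA$ but not in $xA$. Your worry about the case $0<r\leq\frac12$ is unnecessary: Lemma~\ref{lem:sumofrealnumbers} is stated for both ranges of $r$, so no modification of $A$ is needed, exactly as in the paper.
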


\begin{proof}
Let $A$ be the subset of $G$ defined by $$A=\{(r^{a_1}+\dotsb+
r^{a_n})t\cdot x^j\mid 0\leq a_1<a_2<\dotsb<a_n,\ a_i\in\integers,\
n,j\in\naturals \}.$$ Observe that $tx\cdot(r^{a_1}+\dotsb
+r^{a_n})t\cdot x^j=(t+x(r^{a_1}+\dotsb +r^{a_n})x^{-1})\cdot
x^{j+1}=(1+r^{a_1+1}+\dotsb +r^{a_n+1})t\cdot x^{j+1}$.  On the
other hand $x\cdot (r^{b_1}+\dotsb +r^{b_m})t\cdot
x^l=x(r^{b_1}+\dotsb +r^{b_m})tx^{-1}\cdot
x^{l+1}=(r^{b_1+1}+\dotsb+ r^{b_m+1})t\cdot x^{l+1}$. Thus
$txA\subseteq A$ and $xA\subseteq A$. But by
Lemma~\ref{lem:sumofrealnumbers}, $txA\cap xA=\emptyset$. Therefore,
$\{tx,x\}$ generate a noncommutative free monoid by
Lemma~\ref{lem:monoidpingponglemma}.
\end{proof}

\noindent {\bf Proof of Theorem~\ref{theo:mainresult} for ordered
groups of Type~2.} Let $(G,<)$ be an ordered group of Type~2. Let
$(B,L)$ be a convex jump  of $(G,<)$ such that $[L,G]\nsubseteq B$.

We want to find a noncommutative free monoid  generated by two
positive elements of $(G,<)$. For this it is enough to find two
positive elements of $(G/B,\prec)$ that generate a noncommutative
free submonoid of $G/B$.

Set $H=L/B$ which is an additive subgroup of $(\mathbb{R},+)$. Since
$[L,G]\nsubseteq B$ and $L\lhd G$, there exists $x\in G/B$ such that
conjugation by $x$ defines a nonidentity morphism of the ordered
group $H$. Hence $x$ acts on $H$ as multiplication by a positive
real number $r\neq 1$. Changing to $x^n$ if necessary, we can
suppose that either $r\geq 2$ or $0<r\leq \frac{1}{2}$. Clearly, if
$C$ is the infinite cyclic group generated by $x$, we obtain that
the subgroup generated by $H$ and $C$ is a semidirect product
$H\rtimes C$. Then by Lemma~\ref{lem:freemonoid2ndordering} we
obtain the free monoid of positive elemens inside $G/B$ as desired.

Now apply Lemma~\ref{lem:freemonoidfreegroupalgebra} to obtain a
noncommutative free group algebra inside $K(G)$. \qed

\subsection{Ordered groups of
Type~3}\label{sec:orderedgroupsoftype3} The proof of
Theorem~\ref{theo:mainresult} for ordered groups of type~3 follows
directly from the following result and
Lemma~\ref{lem:freemonoidfreegroupalgebra}.
Lemma~\ref{lem:convexnotnormal} was first proved in
\cite[Lemma~8]{LongobardiMajRhemtullanofree}. Our proof is  based on
the ordering on the group and exhibits the generators of the free
monoid when the order is well known.

\begin{lem}\label{lem:convexnotnormal}
Let $(G,<)$ be an ordered group  with a convex subgroup $C$ which is
not normal in $G$, then $G$ contains a noncommutative free monoid.
\end{lem}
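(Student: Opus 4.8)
The plan is to use the ping-pong criterion from Lemma~\ref{lem:monoidpingponglemma}: it suffices to exhibit a nonempty set $A\subseteq G$ and elements $a,b\in G$ with $aA\cup bA\subseteq A$ and $aA\cap bA=\emptyset$. Since $C$ is convex but not normal, there is some $g\in G$ with $g^{-1}Cg\neq C$. By the chain property of $\Gamma$ applied to the infrainvariant family of convex subgroups, $g^{-1}Cg$ and $C$ are comparable, so after replacing $g$ by $g^{-1}$ if necessary we may assume $g^{-1}Cg\subsetneq C$, i.e.\ $gCg^{-1}\supsetneq C$ strictly. Pick an element $d\in gCg^{-1}\setminus C$; by the ordering we may take $d$ positive (replace $d$ by $d^{-1}$). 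The key point I would extract from convexity is that $d$ \emph{dominates} $C$: since $C$ is convex and $d\notin C$ with $d>1$, we have $d>c$ for every $c\in C$, and in fact $dc'>c$ and $d^{-1}<c$ for all $c,c'\in C$ as well — more precisely, for any $c_1,c_2\in C$ one has $c_1 d c_2$ is positive and lies outside $C$ on the "large" side.

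Next I would guess the generators and the set. Set $a=d$ (or a suitable positive element of $gCg^{-1}$ outside $C$) and take $b$ to be a positive element of $C$ that is "small" relative to the jump — concretely, one wants $b\in C$ with $b>1$, and then let $A=\{\,b^{n_0} a^{m_1} b^{n_1} a^{m_2}\cdots \,\}$ be the set of positive elements expressible as a word in which every occurrence of $a$ has positive exponent, with an appropriate normalization; the cleaner choice, mirroring Lemma~\ref{lem:freemonoid2ndordering}, is to let $A$ be the set of all products of the form $w\cdot d^{j}$ with $j\geq 1$ where $w$ ranges over $C$, together with the condition that $w$ be positive. One then checks $aA\subseteq A$ because left-multiplying by $d$ either increases the $d$-exponent or, when moving $d$ past elements of $C$, uses $d C d^{-1}\subseteq C$ (this is where $d\in gCg^{-1}$ with the containment $C\subsetneq gCg^{-1}$, equivalently $d^{-1}Cd\subseteq C$, is used — it guarantees conjugation by $d$ stabilizes $C$ in one direction). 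And $bA\subseteq A$ is automatic since $b\in C$. The disjointness $aA\cap bA=\emptyset$ comes from a "leading term" comparison: an element of $aA$ has strictly larger $d$-content than any element of $bA$, because $d$ dominates all of $C$ in the order; this is the role played by Lemma~\ref{lem:sumofrealnumbers} in the Type~2 argument, and here it should be replaced by a direct order-theoretic argument using convexity of $C$.

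The main obstacle is getting the set $A$ and the bookkeeping exactly right so that both the invariance $aA\cup bA\subseteq A$ and the disjointness $aA\cap bA=\emptyset$ hold simultaneously; the tension is that $a=d$ does not normalize $C$ on both sides, so one must be careful that pushing $d$'s to one side of a word stays inside $A$. I expect the resolution is to define $A$ asymmetrically — e.g.\ as words $c_0 d c_1 d c_2 \cdots d c_k$ with all $c_i\in C$, $c_i>1$ for $i\geq 1$ (and $c_0\geq 1$), reading left to right — and to verify that $dA\subseteq A$ by absorbing the new leading $d$ (trivial) while $cA\subseteq A$ for $c\in C$ positive by merging $c$ into $c_0$; disjointness then follows because every element of $dA$ has a $d$ in its normal form forced to the far left by the domination property whereas elements of $cA$ with $c\in C$ need not, and a careful order comparison of the largest elements of the supports separates the two sets. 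Once the free monoid on $\{a,b\}$ (after checking it is genuinely noncommutative, which is immediate since $a\notin C\ni b$ forces $ab\neq ba$) is in hand, Lemma~\ref{lem:freemonoidfreegroupalgebra} finishes the Type~3 case, though that last step is outside the present lemma.
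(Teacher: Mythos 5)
There is a genuine gap: the generators you propose do not in general generate a free monoid, so the argument fails at its starting point rather than merely in the bookkeeping. You take $a=d$ to be a positive element of $gCg^{-1}\setminus C$ (where $C\varsubsetneq gCg^{-1}$) and $b$ to be an arbitrary positive element of $C$. But the non-normality of $C$ is witnessed by the conjugator $g$, not by $d$; nothing prevents $d$ from normalizing, or even centralizing, $C$. Concretely, let $H=\bigoplus_{i\in\integers}\integers$, ordered so that an element is positive exactly when its highest nonzero coordinate is positive, and let $G=H\rtimes\langle t\rangle$ with $t$ shifting coordinates upward; this is an ordered group, $C=\{h\in H\mid h_i=0 \text{ for } i>0\}$ is convex and not normal, and $tCt^{-1}\varsupsetneq C$. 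Yet every $d\in tCt^{-1}\setminus C$ lies in the abelian group $H$ and commutes with every $b\in C$, so $\{d,b\}$ generates a commutative monoid. Relatedly, your parenthetical claim that $C\varsubsetneq gCg^{-1}$ is ``equivalently $d^{-1}Cd\subseteq C$'' is false: $d$ and $g$ are different elements, and $d\in gCg^{-1}$ says nothing about how $d$ conjugates $C$.

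The paper's proof makes precisely the choice your plan is missing: one of the two generators must be the conjugator itself. It takes $b\in G$ with $bCb^{-1}\varsubsetneq C$ (available after replacing the witness by its inverse, using the chain property of convex subgroups) and then $a\in C\setminus bCb^{-1}$ of the same sign as $b$; the condition $a\notin bCb^{-1}$ is exactly what rules out the degeneracy above, since it forces the conjugates $b^nab^{-n}$ to determine a strictly descending chain of convex jumps $(N_n,H_n)$ with $H_{n+1}\varsubsetneq H_n$. The freeness verification is then not a ping-pong on a cleverly constructed set $A$ (the step you yourself flag as the main obstacle and leave unexecuted), but a direct argument: equating two distinct positive words in $a,b$, one beginning with $a$ and the other with $b$, and rewriting each side as a product of such conjugates followed by a power of $b$, forces either $b^{m-l}\in H_0$ or $a^{n_1}\in H_1$, both of which are impossible. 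So while your general instinct (extract a free monoid from the failure of normality of a convex subgroup) matches the paper, the proposal as written is a plan whose central construction is both unspecified and, with the stated choice of generators, unsalvageable.
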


\begin{proof}
First recall that for every $x\in G$, $xCx^{-1}$ is a convex
subgroup of $(G,<)$. Thus, since the convex subgroups of $(G,<)$
form a chain, there exists $b\in G$ such that $bCb^{-1}\varsubsetneq
C$. Indeed, if $C\varsubsetneq xCx^{-1}$, then
$x^{-1}Cx\varsubsetneq C$. Notice that $b\notin C$, and if
$(N_b,H_b)$ is the convex jump determined by $b$, $C\subseteq
N_b\varsubsetneq H_b$. Choose $a\in C$, such that $a\notin bCb^{-1}$
and $a>1$ if $b>1$ or $a<1$  in case $b<1$. Observe that for each
$n\geq 0$, $b^{n+1}Cb^{-(n+1)}\varsubsetneq b^nCb^{-n}$ and
$b^nab^{-n}\in b^nCb^{-n}\setminus b^{n+1}Cb^{-(n+1)}$. Let
$(N_0,H_0)$ be the convex jump determined by $a$. Then $H_0\subseteq
C\varsubsetneq H_b$. For each $n\geq 1$, let $(N_n,H_n)$ be the
convex jump determined by $b^nab^{-n}$. Then $b^nab^{-n}\in H_n$ but
$b^nab^{-n}\notin N_n$. Hence, by definition of $H_{n+1}$ and $N_n$,
$H_{n+1}\subseteq b^{n+1}Cb^{-(n+1)}\subseteq N_n$. Therefore
$H_{n+1}\varsubsetneq H_n$ for all $n\geq 0$.

We claim that $\{a,b\}$ generate a noncommutative free monoid.
Suppose that $\{a,b\}$ satisfy a positive word. We can suppose that
\begin{equation}\label{eq:positiveword}
a^{n_1}b^{m_1}\cdots a^{n_s}b^{m_s}=b^{l_1}a^{k_2}b^{l_2}\cdots
a^{k_t}b^{l_t}
\end{equation}
with $m_i,n_i\geq 0$, $l_j,k_j\geq 0$ for all $i$, $j$ and $n_1\geq
1$, $l_1\geq 1$. Then \eqref{eq:positiveword} can be written as
\begin{eqnarray}
& a^{n_1}(b^{m_1}a^{n_2}b^{-m_1})
\cdots (b^{m_1+\dotsb+m_{s-1}}a^{n_s}b^{-(m_1+\dotsb+m_{s-1})})b^{m_1+\dotsb+m_s}\nonumber\\
&= &   \label{eq:equalityofwords} \\
& (b^{l_1} a^{k_2}b^{-l_1}) \cdots (b^{l_1+\dotsb
+l_{t-1}}a^{k_t}b^{-(l_1+\dotsb+l_{t-1})})b^{l_1+\dotsb+l_t}\nonumber
\end{eqnarray}
Set $l=l_1+\dotsb+l_t$ and $m=m_1+\dotsb+m_s$. Suppose that $l<m$.
Then $b^{m-l}\in H_0$ by \eqref{eq:equalityofwords} because
$a^{n_1}\in H_0$,
$b^{(m_1+\dotsb+m_i)}a^{n_{i+1}}b^{-(m_1+\dotsb+m_i)}\in H_0$ for
all $1\leq i\leq s-1$ and
$b^{l_1+\dotsb+l_j}a^{k_{j+1}}b^{-(l_1+\dotsb+l_j)}\in H_0$ for all
$1\leq j\leq t-1$. This is a contradiction because $b^{m-l}\in
H_b\setminus H_0$. Analogously we obtain a contradiction if $m<l$.
Suppose now that $l=m$. Then \eqref{eq:equalityofwords} shows that
$a^{n_1}\in H_1$ because
$b^{(m_1+\dotsb+m_i)}a^{n_{i+1}}b^{-(m_1+\dotsb+m_i)}\in H_1$ for
all $1\leq i\leq s-1$ and
$b^{l_1+\dotsb+l_j}a^{k_{j+1}}b^{-(l_1+\dotsb+l_j)}\in H_1$ for all
$1\leq j\leq t-1$.      This is a contradiction because $a^{n_1}\in
H_0\setminus H_1$.

Notice now that if $a,b<1$ generate a free monoid, then
$1<a^{-1},b^{-1}$ and they also generate a free monoid. Therefore we
can suppose that $a,b$ are positive elements of $(G,<)$ that
generate a noncommutative free monoid.
\end{proof}

\subsection{Remarks}
There are orderable groups $G$ such that for each ordering $<$ on
$G$, $(G,<)$ is of type 2, and moreover, each noncommutative
finitely generated subgroup $H$ of $G$ is also of type 2. For
example each ordering $<$ on $B(1,2)=\{a,b\mid bab^{-1}=a^2\}$ is of
type 2, see for example
\cite[Section~4]{RivasOnspacesofConradorderings}. Then it is easy to
prove that each noncommutative finitely generated subgroup is of
type 2. Hence the argument to find a group algebra when $(G,<)$ is
of type~2 is necessary.

Following \cite{BludovGlassRhemtulla1}, let $\mathcal{C}$ denote the
class of orderable groups in which every order is central i.e. of
type~1, and let $\mathcal{C}_2$ be the class of orderable groups in
which every order on every two-generator subgroup is central. Then
$\mathcal{C}_2$ is not empty because the class of torsion-free
locally nilpotent groups is in $\mathcal{C}_2$
\cite{SmirnovInfrainvariantsubgroups}, and
$\mathcal{C}_2\varsubsetneq \mathcal{C}$
\cite{BludovGlassRhemtulla1}. Hence  the argument to find free group
algebras when $(G,<)$ is of type 1 is necessary for the elements in
$\mathcal{C}_2$. Although every locally soluble group in
$\mathcal{C}_2$ is locally nilpotent \cite{BludovGlassRhemtulla1},
there are finitely generated non-locally-soluble groups in
$\mathcal{C}_2$ \cite{BludovGlassRhemtulla2}. Also there are
non-locally-soluble groups in $\mathcal{C}_2$ which are residually
torsionfree nilpotent without noncommutative free subsemigroups
\cite{BludovGlassRhemtulla2}.

It is known that there are orderable groups $G$ such that every
ordered group $(G,<)$ has no normal convex subgroups (see for
example \cite{ZenkovMedvedevDlabgroups} and the references there).
We do not know whether there exists a group satisfying that every
noncommutative finitely generated subgroup (or every two-generator
subgroup) inherits this property. Hence it could happen that the
argument for ordered groups of type 3 is not necessary because some
subgroup of $G$ could be made into an ordered group of type 1 or 2.

\section{Some consequences}\label{sec:someconsequences}

\begin{coro}\label{coro:maincorollary}
Let $G$ be a noncommutative orderable group. Let $K$ be a skew
field, and let $K[G]$ be the group ring over $K$. If $D$ is any skew
field of fractions of $K[G]$ (it may not be the Malcev-Neumann skew
field of fractions), then $D$ contains a noncommutative free algebra
over its center.

If, moreover, the center of $D$ is uncountable, then $D$ contains a
noncommutative free group algebra over its center.
\end{coro}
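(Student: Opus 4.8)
\textbf{Proof proposal for Corollary~\ref{coro:maincorollary}.}
The plan is to deduce this from Theorem~\ref{theo:mainresult} by exploiting the fact, recorded in the preliminaries, that the Malcev-Neumann skew field of fractions $K(G)$ does not depend on the chosen order on $G$, together with the universal-type property of ``free'' skew fields of fractions. First I would fix an order $<$ on the noncommutative orderable group $G$, form $K(G)=K(G;<)$ inside $K((G;<))$, and invoke Theorem~\ref{theo:mainresult}: since $(G,<)$ is a noncommutative ordered group, $K(G)$ contains a noncommutative free group algebra over its center, hence in particular a noncommutative free algebra over its center. Now let $D$ be an arbitrary skew field of fractions of $K[G]$. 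The key step is to produce a specialization, i.e. to show that the free algebra we have found in $K(G)$ is ``transported'' to a free algebra in $D$. Concretely, by Lemma~\ref{lem:freemonoidfreegroupalgebra} (or by the explicit constructions in the three type-by-type proofs of Theorem~\ref{theo:mainresult}) the free algebra is generated by finitely many elements each of which is a ratio $fg^{-1}$ with $f,g\in K[G]$, $g\neq 0$. In $D$ these same ratios make sense (since $D$ is a skew field of fractions of $K[G]$, every nonzero element of $K[G]$ is invertible in $D$), and I want to argue that they still generate a free algebra.

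The mechanism I would use is the following. By Hughes' theorem \cite{Hughes} (already cited in the excerpt for the order-independence of $K(G)$, and in the form of ``Hughes' Theorem'' alluded to in the preamble) the Malcev-Neumann field $K(G)$ is the \emph{universal} field of fractions of $K[G]$ in the appropriate sense: for any skew field of fractions $D$ of $K[G]$ there is a specialization $K(G)\dashrightarrow D$, that is, there is a local subring $R_0$ of $K(G)$ containing $K[G]$ with $R_0\twoheadrightarrow D$ a ring epimorphism restricting to the identity on $K[G]$. A noncommutative free algebra $P\langle Y\rangle$ generated inside $K(G)$ by elements of $K[G]$-ratios lies in $R_0$ (its generators are ratios of elements of $K[G]$, hence lie in any subring of $K(G)$ containing $K[G]$ and closed under the relevant inverses — one arranges $R_0$ to be such a ring, or one passes to the $P$-subalgebra generated by the generators, which is already a free algebra and is contained in $R_0$). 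Under the epimorphism $R_0\to D$ the generators map to the corresponding $K[G]$-ratios in $D$. The only thing that can go wrong is that the free algebra degenerates; but a ring epimorphism from a domain need not be injective, so I instead argue directly: a nonzero element of a free algebra $P\langle y_1,\dots,y_k\rangle$ is a nonzero $P$-linear combination of monomials in the $y_i$, and applying the ring homomorphism $R_0\to D$ sends such an element to the corresponding $P$-linear combination of monomials in the images; because $D$ itself contains (by the same construction applied internally to $D$, or by a dimension/transcendence argument) no polynomial relation among these images, it is nonzero. To make this clean I would instead run the argument the other way: realize the free algebra constructions of Lemma~\ref{lem:freemonoidfreegroupalgebra} and of the type-1,2,3 proofs so that they depend only on data visible in $K[G]$ (the monoid/group elements and the Ore localizations of $P[G/N]$ or $K[A]$), and observe those localizations embed in \emph{any} skew field of fractions of $K[G]$, not just in the Malcev-Neumann one. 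Indeed in types 2 and 3 the generators $1+\bar x$ with $x$ in a free submonoid of $G$ already lie in $K[G]$, and freeness of the group algebra they generate over $P$ is, by Lemma~\ref{lem:freegroupalgebraovercentre} and the power-series embedding of \cite{AraDicks}, a statement about $K[G]$ alone; it therefore persists in $D$. In type 1 the generators are ratios from $Q_{cl}(K[A])\subseteq Q_{cl}(K[G])$ for a torsion-free nilpotent quotient, and since $K[G]$ is an Ore domain in that case $Q_{cl}(K[G])$ is \emph{the} field of fractions, so $D=Q_{cl}(K[G])$ and there is nothing more to prove.

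Having placed a noncommutative free $P$-algebra in $D$ (where $P$ is the prime subfield), the passage to the center of $D$ is immediate from Lemma~\ref{lem:freegroupalgebraovercentre}: $D$ is a domain with prime subfield $P$, its center $Z(D)$ is a central subfield, and a set of elements of $U(D)$ generates a free monoid (resp. free group) algebra over $P$ if and only if it does so over $Z(D)$. This gives the first assertion: $D$ contains a noncommutative free algebra over its center, and in fact a noncommutative free \emph{group} algebra over its center in types 2 and 3 (and in type 1, where $D$ is forced to be the Ore field of fractions). For the final assertion, when $Z(D)$ is uncountable I would invoke \cite[Lemma~2.1]{GoncalvesShirvani} (equivalently the results of \cite{GoncalvesShirvani} cited in the Introduction): a finitely generated (as a skew field over its center) infinite-dimensional skew field with uncountable center containing a noncommutative free algebra over its center in fact contains a noncommutative free group algebra over its center; since $D$ is generated by $K[G]$ with $G$ noncommutative orderable, $D$ is infinite dimensional over its center by \cite{Chibafreefields}, and the first part supplies the free algebra, so the hypothesis is met. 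The main obstacle, and the point that needs the most care in writing, is the transport step: making precise that the \emph{freeness} established in $K(G)$ in each of the three cases is an intrinsic property of $K[G]$ (or of an Ore localization thereof that embeds in every field of fractions), rather than an accident of the Malcev-Neumann embedding — this is where Hughes' specialization theorem and the Ore/torsion-free-nilpotent uniqueness of fields of fractions do the work. \qed
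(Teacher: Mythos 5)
Your proposal misses the simple dichotomy on which the paper's proof rests, and the ``transport'' mechanism you substitute for it does not work. Since $G$ is orderable, $K[G]$ is a domain, and the paper splits into two cases. If $K[G]$ is not an Ore domain, then $K[G]$ itself already contains a noncommutative free algebra over the prime subfield (\cite[Proposition~10.25]{Lam2}), hence so does every skew field of fractions $D$, and Lemma~\ref{lem:freegroupalgebraovercentre} moves this to the center of $D$. If $K[G]$ is an Ore domain, then its skew field of fractions is unique, so the arbitrary $D$ is forced to coincide with the division closure of $K[G]$ in $K((G;<))$, i.e.\ $D=K(G)$, and Theorem~\ref{theo:mainresult} applies verbatim. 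No specialization from $K(G)$ to $D$ is ever needed.

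The specific steps of your argument that fail are these. (i) Hughes' theorem is a uniqueness statement for Hughes-free skew fields of fractions; it does not assert that $K(G)$ is a universal field of fractions admitting a specialization onto every skew field of fractions of $K[G]$, so the local subring $R_0\twoheadrightarrow D$ you posit is not available. (ii) Your claim that a type-1 order forces $K[G]$ to be Ore is false: type 1 only says that every convex jump is central, and the paper's own remarks exhibit non-locally-soluble (in particular non-nilpotent) groups all of whose orders are central; for such $G$ the group ring need not be Ore, and this is exactly the case your argument cannot reach, since the type-1 generators $\hat A_i\hat B_i^{-1}$ are good preimages whose freeness is certified only inside the Malcev-Neumann series ring, not intrinsically in $K[G]$. (iii) For types 2 and 3 the elements $\bar x$ of a free submonoid do give a free $P$-algebra already inside $K[G]$, so that part is sound for the first assertion, but the free \emph{group} algebra on the elements $1+\bar x$ does not automatically persist in an arbitrary $D$, since rational identities can differ between non-isomorphic fields of fractions. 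The uncountable-center step at the end is essentially the paper's (it cites \cite{GoncalvesShirvani}), though the extra hypotheses you attach there are not needed.
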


\begin{proof}
If $K[G]$ is not an Ore domain, it is well known that $K[G]$
contains a free algebra over the prime subfield $P$ of $K$
\cite[Proposition~10.25]{Lam2}, and therefore $D$ contains a
noncommutative free algebra over the center of $D$ by
Lemma~\ref{lem:freegroupalgebraovercentre}.

If $K[G]$ is a right (or left) Ore domain, then  $D$ is the Ore
field of fractions of $K[G]$. Fix a total order on $G$ such that
$(G,<)$ is an ordered group. Then $K[G]$, and thus $D$, embeds in
$K((G;<))$. Hence $D=K(G;<)$ contains a free group algebra by
Theorem~\ref{theo:mainresult}.

By \cite[Theorem]{GoncalvesShirvani}, if $D$ has uncountable center
$Z$ and contains a noncommutative free algebra over $Z$, then it
contains a noncommutative free group algebra over $Z$.
\end{proof}

Now we show that in fact we have shown something stronger than
Theorem~\ref{theo:mainresult}.

\begin{coro}
Let $(G,<)$ be an orderable group with $G$ not in $\mathcal{C}_2$.
Let $K$ be a skew field and $K[\bar{G};\sigma,\tau]$ a crossed
product group ring. Then $K(\bar{G};\sigma,\tau)$ contains a free
group algebra over its center.
\end{coro}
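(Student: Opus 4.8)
The plan is to reduce the crossed-product statement to the ordinary group-ring case handled by Theorem~\ref{theo:mainresult}, following exactly the three-type trichotomy used there. The key observation is that in Sections~\ref{sec:orderedgroupsoftype2} and~\ref{sec:orderedgroupsoftype3} the free group algebra was produced by first locating a noncommutative free submonoid $X$ of positive elements inside $(G,<)$ and then invoking Lemma~\ref{lem:freemonoidfreegroupalgebra}, which is already stated for an arbitrary crossed product $K[\bar{G};\sigma,\tau]$. So for ordered groups of Type~2 and Type~3 there is literally nothing to change: since $G\notin\mathcal{C}_2$, $G$ admits an order $<$ which is not central on some two-generator subgroup, hence $(G,<)$ is of Type~2 or Type~3 (if it were of Type~1, every two-generator subgroup would be centrally ordered; more carefully, one picks $<$ witnessing non-membership in $\mathcal{C}_2$, which forces the presence of a non-central convex jump or a non-normal convex subgroup). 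The arguments in those two subsections produce a noncommutative free submonoid of $G$ generated by positive elements, and Lemma~\ref{lem:freemonoidfreegroupalgebra} then gives a noncommutative free group $Z$-algebra inside $K(\bar{G};\sigma,\tau)$, where $Z$ is its center.

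The remaining case is when the order $<$ we chose is of Type~1. First I would argue that this case cannot actually occur under the hypothesis $G\notin\mathcal{C}_2$: if $(G,<)$ is of Type~1, then every convex jump of $(G,<)$ is central, and hence (as shown in the Type~1 discussion) every two-generator subgroup $H\leq G$, equipped with the restricted order, has all its convex jumps central as well, so every order on $H$ obtained by restriction is central. One must be slightly careful because $\mathcal{C}_2$ concerns \emph{every} order on every two-generator subgroup, not just restrictions of orders on $G$; but the point of choosing $<$ is precisely to witness $G\notin\mathcal{C}_2$, and the cleanest route is: pick a two-generator subgroup $H$ and an order on $H$ that is not central, extend or relate it to an order on $G$ making $(G,<)$ of Type~2 or~3 — concretely, the non-central order on $H$ already exhibits a non-central convex jump inside $H$, and one works directly inside $K(\bar{H};\sigma,\tau)\subseteq K(\bar{G};\sigma,\tau)$, applying the Type~2 or Type~3 argument to $(H,<)$. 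Thus the hard part is really just bookkeeping: making sure one reduces to a finitely generated subgroup on which the order is genuinely non-central, then applying the monoid-finding lemmas to that subgroup.

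The main obstacle I anticipate is the interplay between "not in $\mathcal{C}_2$" and the type of a \emph{given} order. The class $\mathcal{C}_2$ is defined so that $G\in\mathcal{C}_2$ iff every order on every two-generator subgroup is central; negating this yields a two-generator subgroup $H$ and an order $<_H$ on $H$ that is not central, i.e. $(H,<_H)$ has a non-central convex jump or a non-normal convex subgroup, so $(H,<_H)$ is of Type~2 or Type~3. Then the Type~2 and Type~3 proofs of Theorem~\ref{theo:mainresult} apply verbatim to $(H,<_H)$: they produce a noncommutative free submonoid of $H$ on two positive generators, and Lemma~\ref{lem:freemonoidfreegroupalgebra} — applied to the crossed product $K[\bar{H};\sigma,\tau]$, which is a sub-crossed-product of $K[\bar{G};\sigma,\tau]$ — yields a noncommutative free group algebra over the center $Z'$ of $K(\bar{H};\sigma,\tau)$. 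Since $K(\bar{H};\sigma,\tau)\subseteq K(\bar{G};\sigma,\tau)$ and $Z'$ contains the prime subfield $P$ of $K$, Lemma~\ref{lem:freegroupalgebraovercentre} upgrades this to a free group algebra over the center $Z$ of $K(\bar{G};\sigma,\tau)$. This completes the proof; no analogue of the Type~1 reduction to nilpotent groups (which genuinely used that the crossed product was a group ring, via the augmentation map in Example~\ref{ex:exampleofextension}) is needed, which is exactly why the hypothesis is $G\notin\mathcal{C}_2$ rather than merely $G$ noncommutative.
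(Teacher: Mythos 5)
Your proposal is correct and follows essentially the same route as the paper's own proof: for an order of Type~2 or~3 the free-monoid constructions of Sections~\ref{sec:orderedgroupsoftype2} and~\ref{sec:orderedgroupsoftype3} feed directly into Lemma~\ref{lem:freemonoidfreegroupalgebra} (which is already stated for crossed products), and in the Type~1 case the hypothesis $G\notin\mathcal{C}_2$ supplies a two-generator subgroup $H$ with a non-central order, hence of Type~2 or~3, so one finds the free monoid in $H$ and works inside $K(\bar{H};\sigma,\tau)\subseteq K(\bar{G};\sigma,\tau)$ before upgrading to the center via Lemma~\ref{lem:freegroupalgebraovercentre}. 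The only thing to discard is the first-paragraph suggestion that the Type~1 case ``cannot actually occur'': it can, since $\mathcal{C}_2$ quantifies over \emph{all} orders on all two-generator subgroups rather than restrictions of the given $<$, but your later paragraphs already make this correction and give the argument the paper itself uses.
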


\begin{proof}
If $(G,<)$ is of type 2 or 3, then we have shown in
Section~\ref{sec:orderedgroupsoftype2} and in
Section~\ref{sec:orderedgroupsoftype3}, respectively, that $G$
contains a noncommutative free monoid.

If $(G,<)$ is of type 1, but $G\notin \mathcal{C}_2$, then there
exists a subgroup $H$ of $G$ and a total order $<'$ of $H$ such that
$(H,<')$ is of type 1 or 2. Hence $H$ contains a noncommutative free
monoid.

Now Lemma~\ref{lem:freemonoidfreegroupalgebra} implies the result.
\end{proof}

We prove the next result with the techniques developed in
Section~\ref{section:oderedgroupstype1}. When $G$ is a residually
torsion-free nilpotent group,
Proposition~\ref{prop:GoncalvesShirvani} gives
\cite[Theorem~5]{GoncalvesMandelShirvaniQuaternions} whose original
proof relied on
\cite[Proposition~3.1]{LichtmanOnembeddingofgroupringsofresidually}.

\begin{prop}\label{prop:GoncalvesShirvani}
Let $K$ be a commutative field of characteristic not 2, and let
$(G,<)$ be an ordered group. Let $x,y$ be two noncommuting elements
of $G$ and $H$ the subgroup of $G$ that they generate. If there
exists a subgroup $N\lhd H$ such that $H/N$ is a noncommutative
torsion-free nilpotent group, then, for any $c,d\in
K\setminus\{0\}$, the subgroup $\langle \{1+cx, 1+dy\}\rangle$ of
 $K(G)$ is a noncommutative
free group.
\end{prop}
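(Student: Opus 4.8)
\emph{Plan and reductions.} The idea is to push the assertion down to the Heisenberg quotient, invoke the known case there, and transport freeness back up by the good-preimage technique of Example~\ref{ex:exampleofextension}. Since $1+cx,\,1+dy\in K[H]$ and $K(H)$ is a skew subfield of $K(G)$, it suffices to prove the statement with $G$ replaced by $H$; so assume $G=\langle x,y\rangle$ and fix $N\lhd G$ with $G/N$ noncommutative torsion-free nilpotent (note that $G/N$ is then finitely generated). Let $M\lhd G$ be the preimage of the isolator $\sqrt{\gamma_3(G/N)}$ of the third term of the lower central series of $G/N$; then $N\subseteq M$, and $G/M\cong(G/N)/\sqrt{\gamma_3(G/N)}$ is a two-generator torsion-free nilpotent group of class at most $2$, generated by the images $\bar x,\bar y$ of $x,y$. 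It remains noncommutative: $\gamma_2(G/N)/\gamma_3(G/N)$ is cyclic, generated by the class of $[\bar x,\bar y]$, and were it finite then all lower-central factors of $G/N$ above $\gamma_2$, and hence $\gamma_2(G/N)$ itself, would be finite and so trivial, forcing $G/N$ abelian; thus $[\bar x,\bar y]$ has infinite order modulo $\gamma_3(G/N)$, so $[\bar x,\bar y]\neq 1$ in $G/M$. A noncommutative two-generator torsion-free nilpotent group of class $2$ is the integral Heisenberg group, so $G/M\cong H_3(\integers)$, with $\bar x,\bar y$ a generating pair with nontrivial commutator.

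\emph{The Heisenberg case and the transfer.} Fix an order $\prec$ making $G/M$ an ordered group. Since $K$ has characteristic $\neq 2$ and $c,d\neq 0$, the elements $1+c\bar x$ and $1+d\bar y$ generate a noncommutative free subgroup of $U(Q_{cl}(K[G/M]))\subseteq U(K((G/M;\prec)))$; this is \cite[Theorem~5]{GoncalvesMandelShirvaniQuaternions} applied to the Heisenberg group (any generating pair of $H_3(\integers)$ with nontrivial commutator is taken to the standard one by an automorphism). Put $S=K[M]$, view $K[G]=S[\overline{G/M};\tilde\sigma,\tilde\tau]$ as in Section~\ref{sec:crossedproducts}, and recall from Proposition~\ref{prop:MNfieldsascrossedproducts}(1) that $K[G]$, $K(G)$ and $S((\overline{G/M};\tilde\sigma,\tilde\tau,\prec))$ embed compatibly into $K((\overline{M};\sigma,\tau,<))((\overline{G/M};\tilde\sigma,\tilde\tau,\prec))$. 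Let $\Phi\colon S((\overline{G/M};\tilde\sigma,\tilde\tau,\prec))\to K((G/M;\prec))$ be the augmentation morphism of Example~\ref{ex:exampleofextension}. Writing $x=x_\alpha m$ with $\alpha=xM$ and $m\in M$ one gets $\Phi(1+cx)=1+c\bar x$, and likewise $\Phi(1+dy)=1+d\bar y$; moreover $1+cx$ and $1+dy$ are units of $S((\overline{G/M};\tilde\sigma,\tilde\tau,\prec))$, since each is a series along $G/M$ with exactly two nonzero coefficients, both of which are of the form (nonzero scalar)$\times$(element of $M$), hence units of $S$ (here one uses that $\bar x,\bar y\neq 1$ in $G/M$, which holds because $[\bar x,\bar y]\neq 1$). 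Consequently the homomorphism $\psi$ from the free group of rank $2$ into $U(S((\overline{G/M};\tilde\sigma,\tilde\tau,\prec)))$ sending the free generators to $1+cx$ and $1+dy$ has the property that $\Phi\circ\psi$ is the isomorphism of the free group onto $\langle 1+c\bar x,1+d\bar y\rangle$ furnished by the Heisenberg case; whence $\psi$ is injective and $\langle 1+cx,1+dy\rangle$ is free of rank $2$. By the compatibility of the embeddings above, $\langle 1+cx,1+dy\rangle$ is then free of rank $2$ inside $U(K(G))$ as well, which proves the proposition.

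\emph{Where the difficulty lies.} The substantive ingredient is the Heisenberg base case, which is borrowed from \cite{GoncalvesMandelShirvaniQuaternions}; within the argument itself, the points needing care are the verification that the isolator quotient retains a nontrivial commutator (so that $G/M$ really is Heisenberg) and the bookkeeping of the several Malcev-Neumann rings, so that the resulting free group is seen to lie inside $K(G)$ and not merely inside the auxiliary ring $S((\overline{G/M};\tilde\sigma,\tilde\tau,\prec))$.
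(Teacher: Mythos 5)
Your proposal is correct and follows essentially the same route as the paper: reduce to a class-two torsion-free nilpotent quotient $G/M$ with $N\subseteq M$, invoke the known freeness of $\langle 1+c\bar{x},1+d\bar{y}\rangle$ in $Q_{cl}(K[G/M])$ from \cite{GoncalvesMandelShirvaniQuaternions} (the paper cites Proposition~20 there rather than Theorem~5, but the ingredient is the same), and pull the free group back through the augmentation map $\Phi$ and the embedding diagram of Proposition~\ref{prop:reductiontonilpotent}. Your explicit isolator construction of $M$ and the verification that $[\bar{x},\bar{y}]\neq 1$ survives the quotient merely fill in a step the paper leaves implicit.
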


\begin{proof}
Since $H\leq G$, $K(H)\subseteq K(G)$. Thus we may suppose that $H=G$ and $N\lhd G$.

We borrow  notation from Example~\ref{ex:exampleofextension} and
Proposition~\ref{prop:reductiontonilpotent}.

Let $\alpha_0$  and $\beta_0$ be the cosets of $x$ and $y$,
respectively, in $G/N$. As in Example~\ref{ex:exampleofextension},
fix a transversal $\{x_\alpha\mid \alpha\in G/N\}$ with $x_1=1$ and
such that $x_{\alpha_0}=x$ and $y_{\beta_0}=y$.

Suppose that $G/N$ is  torsion-free nilpotent of class two. Fix a
total order $\prec$ in $G/N$ such that $(G/N, \prec)$ is an ordered
group, and consider the group ring $K[G/N]$. Then, by
\cite[Proposition~20]{GoncalvesMandelShirvaniQuaternions},
$\{1+c\alpha_0, 1+d\beta_0\}$ generate a free group inside
$Q_{cl}(K[G/N])\subseteq K((G/N,\prec))$. Observe that
$1+c\bar{\alpha}_0=1+cx$ and $1+d\bar{\beta}_0=1+dy$ are good
preimages of $1+c\alpha_0, 1+d\beta_0$ inside
$S((\overline{G/N};\tilde{\sigma},\tilde{\tau},\prec))$ and generate
a free group. Proceeding as in
Proposition~\ref{prop:reductiontonilpotent}, one sees that the
subgroup generated by $\{1+cx,1+dy\}$ inside $K(G)$ is a free group.

Now observe that if $G/N$ is torsion-free nilpotent of greater
class, there exists $M\lhd G$, such that $N\subseteq M$ and $G/M$ is
a torsion free nilpotent of class 2. Choosing an adequate
transversal for $G/M$, the foregoing argument implies the result for
$K(G)$.
\end{proof}

\section*{Acknowledgments}

I would like to thank  Jairo Z. Gon\c{c}alves and Vitor O. Ferreira
with whom I have maintained regular discussions during all stages of
this work that have proved most helpful.

\bibliographystyle{plain}
\bibliography{grupitosbuenos}

\end{document}